     \def\section{\@startsection{section}{1}%
     \z@{.7\linespacing\@plus\linespacing}{.5\linespacing}%
     {\bfseries
     \centering
     }}
     \def\@secnumfont{\bfseries}
\newtheorem{theorem}{Theorem}[section]
\newtheorem{lemma}[theorem]{Lemma}
\theoremstyle{definition}
\theoremstyle{remark}
\numberwithin{equation}{section}
\newcommand{\DD}{\mathbb{D}}
\newcommand{\RR}{\mathbb{R}}
\newcommand{\AAA}{\mathcal{A}}
\newcommand{\FFF}{\mathcal{F}}
\newcommand{\LLL}{\mathcal{L}}
\newcommand{\MMM}{\mathcal{M}}
\newcommand{\VVV}{\mathcal{V}}
\newcommand{\ldb}{[\hspace{-1.5pt}[}                        
\newcommand{\df}[1]{\,\mathrm{d}#1}                         
\newcommand{\eps}{\varepsilon}                              
\newcommand{\locall}{\ell}                                    
\newcommand{\msq}{\MMM^2}                                     
\newcommand{\aaf}{\AAA}                                       
\newcommand{\aai}{\AAA^i}                                     
\newcommand{\aal}{\AAA^i_\locall}                             
\newcommand{\vvf}{\VVV}                                       
\newcommand{\vvi}{\VVV^i}                                     
\newcommand{\vvl}{\VVV^i_\locall}                             
\begin{document}

\setlength{\parindent}{0cm}
\setlength{\parskip}{0.5cm}

\title[Existence results in martingale theory]{Proving existence results in martingale theory using a subsequence principle}

\author{Alexander Sokol}

\address{Alexander Sokol: Institute of Mathematics, University of
  Copenhagen, 2100 Copenhagen, Denmark}
\email{alexander@math.ku.dk}
\urladdr{http://www.math.ku.dk/$\sim$alexander}

\subjclass[2000] {Primary 60G07; Secondary 60G44, 60H05}

\keywords{Martingale, Compensator, Quadratic variation, Stochastic integral}

\begin{abstract}
New proofs are given of the existence of the compensator (or dual predictable
projection) of a locally integrable c\`{a}dl\`{a}g adapted process of
finite variation and of the existence of the quadratic variation process for a c\`{a}dl\`{a}g local
martingale. Both proofs apply a functional analytic subsequence
principle. After presenting the proofs, we discuss their application in giving a
simplified account of the construction of the stochastic integral of a
locally bounded predictable process with respect to a semimartingale.
\end{abstract}

\maketitle

\noindent

\section{Introduction}

Assume given a filtered probability space $(\Omega,\FFF,(\FFF_t),P)$
satisfying the usual conditions, see \cite{PP}, Section I.1, for the
definition of this and other standard probabilistic concepts. For a
locally integrable c\`{a}dl\`{a}g adapted process $A$ with initial
value zero and finite variation, the compensator, also known as the
dual predictable projection, is the unique locally integrable
c\`{a}dlag predictable process $\Pi^*_pA$ with initial value zero and
finite variation such
that $A-\Pi^*_pA$ is a local martingale. For a c\`{a}dl\`{a}g local martingale $M$
with initial value zero, the quadratic variation process is the
unique increasing c\`{a}dl\`{a}g adapted process $[M]$ with initial value zero
such that $M^2-[M]$ is a local martingale and $\Delta [M]=(\Delta
M)^2$. In both cases, uniqueness is up to indistinguishability.

For both the dual predictable projection and the quadratic variation,
the proofs of the existence of these processes are among the most
difficult in classical martingale theory, see for example \cite{RW2},
\cite{HWY} or \cite{PP} for proofs. In this article, we give new
proofs of the existence of these processes. The proofs are facilitated
by the following lemma, first applied in this form to probability theory in
\cite{BSV}. We also give a short proof of the lemma.

\begin{lemma}
\label{lemma:Mazur}
Let $(X_n)$ be sequence of variables bounded in $\LLL^2$. There exists
a sequence $(Y_n)$ such that each $Y_n$ is a convex combination of a
finite set of elements in $\{X_n,X_{n+1},\ldots\}$ and $(Y_n)$ is
convergent in $\LLL^2$.
\end{lemma}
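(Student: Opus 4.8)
The plan is to exploit the weak compactness of bounded sets in the Hilbert space $\LLL^2$ together with Mazur's lemma, which converts weak convergence into norm convergence via convex combinations. The key observation is that a sequence bounded in $\LLL^2$ need not converge in norm, but the Hilbert space structure guarantees we can extract a weakly convergent subsequence, and then convexify to recover strong convergence.

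First I would invoke reflexivity: since $\LLL^2=\LLL^2(\Omega,\FFF,P)$ is a Hilbert space, closed balls are weakly compact, and moreover—because $\LLL^2$ is separable in the relevant situations, or more robustly because bounded sequences in any Hilbert space admit weakly convergent subsequences by the Banach--Alaoglu theorem combined with the fact that the weak topology on a ball is metrizable when the space is separable—I can pass to a subsequence $(X_{n_k})$ converging weakly to some limit $X\in\LLL^2$. To avoid any separability hypothesis, I would instead note directly that any bounded sequence in a Hilbert space has a weakly convergent subsequence: this follows by a diagonal argument applied to the countably many inner products against the $X_{n_k}$ themselves, identifying the limit through the Riesz representation theorem. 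This yields $X_{n_k}\rightharpoonup X$ weakly in $\LLL^2$.

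Next I would apply Mazur's lemma to the weakly convergent subsequence $(X_{n_k})$. Mazur's lemma states that if a sequence converges weakly to $X$ in a normed space, then $X$ lies in the norm-closure of the convex hull of the sequence; more precisely, one can construct convex combinations of the $X_{n_k}$ that converge to $X$ in norm. Concretely, for each $m$ the weak closure and the norm closure of the convex hull of $\{X_{n_k}:k\ge m\}$ coincide (a consequence of the Hahn--Banach separation theorem), and since $X$ belongs to the weak closure of each such tail, it belongs to the norm closure; choosing convex combinations realizing distance at most $1/m$ produces the desired $Y_m$. Crucially, because I take convex combinations of the tail $\{X_{n_k}:k\ge m\}$, each $Y_m$ is a convex combination of finitely many elements drawn from $\{X_m,X_{m+1},\ldots\}$, matching the stated requirement on the indexing.

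The main obstacle, and the step deserving the most care, is the extraction of the weakly convergent subsequence without assuming separability of the ambient space, since the full $\LLL^2(\Omega,\FFF,P)$ may fail to be separable. I expect the author circumvents this by working within the closed separable subspace generated by $(X_n)$ itself—the closed linear span $\spanc\{X_n:n\in\NN\}$ is automatically separable and is itself a Hilbert space in which the sequence is bounded—so that Banach--Alaoglu and metrizability of the weak topology on balls apply directly there, and weak convergence in the subspace agrees with weak convergence in $\LLL^2$. Once the weak limit is secured, the passage to norm convergence via Mazur is a clean application of convexity and separation, and the indexing condition is automatic from the tail construction; I would expect the entire argument to be short, which matches the excerpt's promise of a brief proof.
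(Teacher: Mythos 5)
Your proof is correct, but it takes a genuinely different route from the paper. You prove the lemma as a corollary of two pieces of standard functional-analytic machinery: weak sequential compactness of bounded sets in a Hilbert space (with a diagonal argument, or passage to the separable closed span, to dodge separability issues) followed by Mazur's lemma, where the Hahn--Banach separation theorem identifies the weak and norm closures of the convex hulls of the tails. This is exactly the combination the author alludes to in the remark \emph{after} the lemma (weak compactness in a reflexive space plus strong convergence of convex combinations) but deliberately avoids in the proof itself. The paper instead gives a short, self-contained variational argument: let $\alpha_n$ be the infimum of $EZ^2$ over convex combinations $Z$ of elements of $\{X_n,X_{n+1},\ldots\}$, note that $(\alpha_n)$ is nondecreasing and bounded, pick near-minimizers $Y_n$ with $EY_n^2\le\alpha_n+\tfrac{1}{n}$, and use the parallelogram identity together with the observation that $\tfrac{1}{2}(Y_n+Y_m)$ is again a convex combination of the tail $\{X_n,X_{n+1},\ldots\}$ (so its second moment is at least $\alpha_n$) to conclude that $(Y_n)$ is Cauchy in $\LLL^2$. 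What your approach buys is conceptual transparency and generality---it works verbatim in any reflexive Banach space, at the price of invoking Banach--Alaoglu and Hahn--Banach; what the paper's approach buys is elementarity---nothing beyond the parallelogram law and completeness of $\LLL^2$ is used, which fits the article's stated aim of minimizing machinery. One small point in your write-up deserves to be made explicit: the indexing requirement holds because a strictly increasing subsequence satisfies $n_k\ge k$, so convex combinations over $\{X_{n_k}:k\ge m\}$ indeed involve only elements of $\{X_m,X_{m+1},\ldots\}$; you assert this but do not quite say why.
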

\begin{proof}
Let $\alpha_n$ be the infimum of $EZ^2$, where $Z$ ranges through all
convex combinations of elements in $\{X_n,X_{n+1},\ldots\}$,
and define $\alpha=\sup_n\alpha_n$. If $Z=\sum_{k=n}^{K_n}\lambda_k
X_k$ for some convex weights $\lambda_n,\ldots,\lambda_{K_n}$, we obtain $\sqrt{EZ^2}\le\sup_n\sqrt{EX_n^2}$, in particular we have $\alpha_n\le\sup_n EX_n^2$ and so
$\alpha\le\sup_n EX_n^2$ as well, proving that $\alpha$ is finite. For each $n$, there
is a variable $Y_n$ which is a finite convex combination of elements
in $\{X_n,X_{n+1},\ldots\}$ such that $E(Y_n)^2\le \alpha_n+\frac{1}{n}$. Let $n$ be
so large that $\alpha_n\ge\alpha-\frac{1}{n}$, and let $m\ge
n$, we then obtain
\begin{align}
  E(Y_n-Y_m)^2
  &=2EY_n^2+2EY_m^2-E(Y_n+Y_m)^2\notag\\
  &=2EY_n^2+2EY_m^2-4E(\tfrac{1}{2}(Y_n+Y_m))^2\notag\\
  &\le2(\alpha_n+\tfrac{1}{n})+2(\alpha_m+\tfrac{1}{m})-4\alpha_n\notag\\
  &=2(\tfrac{1}{n}+\tfrac{1}{m})+2(\alpha_m-\alpha_n).
\end{align}
As $(\alpha_n)$ is convergent, it is Cauchy. Therefore, the above
shows that $(Y_n)$ is Cauchy in $\LLL^2$, therefore convergent,
proving the lemma.
\end{proof}

Lemma \ref{lemma:Mazur} may be seen as a combination of variants of
the following two classical results: Every bounded sequence in a reflexive Banach
space contains a weakly convergent subsequence (see Theorem 4.41-B of \cite{AET}), and every weakly convergent
sequence in a reflexive Banach space has a sequence of convex
combinations of its elements converging strongly to the weak limit (see
Theorem 3.13 of \cite{WR2}). In \cite{BSV}, an $\LLL^1$ version of
Lemma \ref{lemma:Mazur} is used to give a simple proof of the Doob-Meyer theorem,
building on the ideas of \cite{AJ} and \cite{KMR}.

The remainder of the article is organized as follows. In Section
\ref{section:Compensator}, we give our proof of the existence of the compensator,
and in Section \ref{section:QV}, we give our proof of the
existence of the quadratic variation. In Section
\ref{section:Discussion}, we discuss how these results may be used to
give a simplified account of the theory of stochastic integration with
respect to semimartingales. In particular, the account proposed
excludes the use of: the d\'{e}but theorem, the section theorems and the
Doob-Meyer theorem. Appendix \ref{section:Auxiliary} contains
auxiliary results which are needed in the main proofs.

\section{The existence of the compensator}

\label{section:Compensator}

In this section, we will show that for any c\`{a}dl\`{a}g adapted process $A$ with initial
value zero and paths of finite variation, locally integrable, there
exists a c\`{a}dl\`{a}g predictable process $\Pi^*_pA$ with initial value zero
and paths of finite variation, locally integrable, unique up to indistinguishability, such that $A-\Pi^*_pA$
is a local martingale. We refer to $\Pi^*_pA$ as the compensator of
$A$. The proofs will use some basic facts from the general theory of
processes, some properties of monotone convergence for c\`{a}dl\`{a}g
increasing mappings, and Lemma \ref{lemma:Mazur}. Essential for the
results are the results on the limes superior of discrete
approximations to the compensator, the proof of this is based on the
technique developed in \cite{AJ} and also applied in \cite{BSV}. Note
that as the existence of the compensator follows directly from the
Doob-Meyer theorem, see for example Section I.3b of \cite{JS}, the
interest of the proofs given in this section is that if we restrict
our attention to the compensator of a finite variation process instead
of a submartingale, the complicated uniform integrability arguments
applied in \cite{KMR} may be done away with, and furthermore we need
only an $\LLL^2$ subsequence principle and not an $\LLL^1$ subsequence
principle as in \cite{BSV}. We begin by recalling
some standard nomenclature and fixing our notation.

By $\aaf$, we denote the set of processes which are c\`{a}dl\`{a}g adapted and
increasing with initial value zero. For $A\in\aaf$, the limit
$A_\infty$ of $A_t$ for $t$ tending to infinity always exists in
$[0,\infty]$. We say that $A$ is integrable if $A_\infty$ is
integrable. The subset of integrable processes in $\aaf$ is denoted by
$\aai$. For $A\in\aaf$, we say that $A$ is locally integrable if there
exists a localising sequence $(T_n)$ such that $A^{T_n}\in\aai$. The
set of such processes is denoted by $\aal$. By $\vvf$, we denote the
set of processes which are c\`{a}dl\`{a}g adapted with initial value zero and
has paths of finite variation. For $A\in\vvf$, $V_A$ denotes the
process such that $(V_A)_t$ is the variation of $A$ over
$[0,t]$. $V_A$ is then an element of $\aaf$. For
$A\in\vvf$, we say that $A$ is integrable if $V_A$ is integrable, and we say that $A$ is locally integrable if $V_A$ is
locally integrable. The corresponding spaces of stochastic processes
are denoted by $\vvi$ and $\vvl$, respectively. By $\DD_+$, we
denote the set of nonnegative dyadic rationals,
$\DD_+=\{k2^{-n}|k\ge0,n\ge0\}$. The space of square-integrable
martingales with initial value zero is denoted by $\msq$. Also, we say that two processes $X$
and $Y$ are indistinguishable if their sample paths are almost surely
equal, and in this case, we say that $X$ is a modification of $Y$ and
vice versa. We say that a process $X$ is c\`{a}dl\`{a}g if it is
right-continuous with left limits, and we say that a process $X$ is
c\`{a}gl\`{a}d if it is left-continuous with right limits.

Our main goal in this section is to show that for any $A\in\vvl$, there is
a predictable element $\Pi^*_pA$ of $\vvl$, unique up to indistinguishability, such that $A-\Pi^*_pA$ is a
local martingale. To prove the result, we first establish the existence of the
compensator for some simple elements of $\vvl$, namely processes
of the type $\xi1_{\ldb T,\infty\ldb}$, where $T$ is a stopping time
with $T>0$, $\xi$ is bounded, nonnegative and $\FFF_T$ measurable and
$\ldb T,\infty\ldb=\{(t,\omega)\in\RR_+\times\Omega\mid T(\omega)\le
t\}$. After this, we apply monotone convergence arguments and localisation arguments to obtain the general
existence result.

\begin{lemma}
\label{lemma:SimpleCompensator}
Let $T$ be a stopping time with $T>0$ and let $\xi$ be nonnegative, bounded and $\FFF_T$
measurable. Define $A = \xi1_{\ldb T,\infty\ldb}$. $A$ is then an element of $\aai$, and there exists a
predictable process $\Pi^*_pA$ in $\aai$ such that
$A-\Pi^*_pA$ is a uniformly integrable martingale.
\end{lemma}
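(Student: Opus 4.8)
The first assertion is routine. The process is $A_t=\xi 1_{\{T\le t\}}$, which is right-continuous with left limits and piecewise constant, increasing because $\xi\ge 0$, and null at zero because $T>0$ forces $1_{\{T\le 0\}}=0$. It is adapted since $\{T\le t\}\in\FFF_t$ and $\xi 1_{\{T\le t\}}$ is $\FFF_t$-measurable whenever $\xi$ is $\FFF_T$-measurable, and $A_\infty=\xi$ is integrable because $\xi$ is bounded; hence $A\in\aai$. For the existence of $\Pi^*_pA$ the plan is to build discrete predictable approximations along the dyadic grid and pass to a limit using Lemma~\ref{lemma:Mazur}. For each $n$ I would set $c^n_k=E[A_{(k+1)2^{-n}}-A_{k2^{-n}}\mid\FFF_{k2^{-n}}]$ and define
\[
B^n_t=\sum_{k\ge 0} c^n_k\,1_{\{(k+1)2^{-n}\le t\}},
\]
the discrete Doob compensator placed at the right endpoints, and first check that $B^n$ is a càdlàg, increasing, predictable process in $\aai$ with $B^n_0=0$.

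The structural identities come next. Sampling along the grid $\{k2^{-n}\}$, each difference $A_{(k+1)2^{-n}}-A_{k2^{-n}}-c^n_k$ is a martingale difference, so $A-B^n$ is a martingale on the grid with $A_t-B^n_t=E[\xi-B^n_\infty\mid\FFF_t]$ at such $t$, and $E[B^n_\infty]=E[A_\infty]=E[\xi]$, so $\xi-B^n_\infty$ has mean zero. The quantitative heart is an a priori $\LLL^2$ bound. Writing $c=\sup|\xi|$, one has $0\le c^n_k\le c$ and $B^n_\infty=\sum_k c^n_k$; expanding $E[(B^n_\infty)^2]$ and rewriting the cross terms for $k<l$ as $E[c^n_k c^n_l]=E[c^n_k(A_{(l+1)2^{-n}}-A_{l2^{-n}})]$ by the tower property, then using $\sum_l(A_{(l+1)2^{-n}}-A_{l2^{-n}})=\xi\le c$, yields $E[(B^n_\infty)^2]\le 3c^2$. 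Thus $(B^n_\infty)$ is bounded in $\LLL^2$, which is exactly where boundedness of $\xi$ is used. Applying Lemma~\ref{lemma:Mazur} produces finite convex combinations $\hat B^m_\infty=\sum_n\lambda^m_n B^n_\infty$, with weights supported on indices $\ge m$, converging in $\LLL^2$ to some $\beta$.

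Taking the same convex combinations of processes, $\hat B^m=\sum_n\lambda^m_n B^n$, each remains càdlàg, increasing and predictable. Fix a dyadic $t$ of level $q$; once $m\ge q$ the weights are supported on grids fine enough that $t$ lies on every grid involved, so the grid identity upgrades to $A_t-\hat B^m_t=E[\xi-\hat B^m_\infty\mid\FFF_t]$. Since conditional expectation is an $\LLL^2$-contraction and $\hat B^m_\infty\to\beta$, this gives $\hat B^m_t\to A_t-N_t$ in $\LLL^2$ for every dyadic $t$, where $N_t=E[\xi-\beta\mid\FFF_t]$ is a uniformly integrable $\LLL^2$ martingale with $N_0=0$.

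The final and most delicate step is to manufacture an honest process from these limits: $\LLL^2$ convergence at each $t$ does not by itself deliver a predictable, càdlàg, increasing limit. I would define the candidate compensator pathwise as the limes superior of $(\hat B^m)$ along $\DD_+$, regularised to a càdlàg process by means of the monotone-convergence results for increasing càdlàg maps in Appendix~\ref{section:Auxiliary}. Because each $\hat B^m$ is increasing and predictable, the limes superior inherits monotonicity and is predictable-measurable, while the $\LLL^2$ computation identifies the regularised process with $A-N$ on $\DD_+$. Setting $\Pi^*_pA:=A-N$ then gives a predictable, increasing process, null at zero, whose terminal value $\beta\in\LLL^2\subset\LLL^1$ places it in $\aai$, and $A-\Pi^*_pA=N$ is a uniformly integrable martingale. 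Reconciling the pathwise limsup construction with the per-time $\LLL^2$ limits, and in particular verifying that predictability survives the càdlàg regularisation, is the main obstacle, and it is precisely here that the auxiliary convergence lemmas do the real work.
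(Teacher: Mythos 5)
Your construction up to the limit identification is essentially the paper's argument in light disguise, and that part is sound: your right-endpoint process $B^n$ is the c\`{a}dl\`{a}g predictable counterpart of the paper's c\`{a}gl\`{a}d discrete compensator (the grid values coincide), your cross-term expansion giving $E(B^n_\infty)^2\le 3c^2$ is a valid variant of the paper's telescoping bound $E(B^n_\infty)^2\le 2c^2$, and applying Lemma \ref{lemma:Mazur} to $(B^n_\infty)$ is equivalent to applying it to the closing martingale variables $M^n_\infty=\xi-B^n_\infty$, as the paper does. The dyadic argument that $A-N$ is almost surely increasing also matches the paper.

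The genuine gap is your final paragraph, which you flag as "the main obstacle" but do not close, and it is exactly where the paper spends most of its effort. Two concrete problems. First, $X:=\limsup_m \hat B^m$ is predictable but in general not right-continuous, while $A-N$ is c\`{a}dl\`{a}g; almost sure agreement at all dyadic times identifies the two processes only at continuity points of $A-N$. Jump times of the compensator genuinely occur (if $T$ is predictable, $\Pi^*_pA$ jumps at $T$), and at such a time nothing you have established shows $(A-N)_t=X_t$; predictability does not survive a c\`{a}dl\`{a}g "regularisation" unless the identification holds at the jumps themselves, which is precisely what lets the paper invoke Theorem 3.33 of \cite{HWY}. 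The appendix lemmas do not supply this identification; the paper needs a dedicated argument: exhaust the jumps of $B=A-N$ by countably many stopping times $S$, get $\limsup_n C^n_S\le B_S$ from monotonicity, then upgrade to equality by showing $EB^n_S\to EB_S$ via the discretized stopping times $S_n$ (using $B^n_S=B^n_{S_n}$ and $A^n_{S_n}=A_{S_n}$ together with dominated convergence), combined with the reverse Fatou lemma for uniformly integrable sequences, Lemma \ref{lemma:ReverseUIFatou}. Second, to evaluate the limit of $\hat B^m$ at an arbitrary stopping time at all, your per-dyadic-$t$ convergence in $\LLL^2$ is too weak: the paper first upgrades to almost sure uniform convergence of the martingale parts via Doob's $\LLL^2$ maximal inequality (Theorem II.70.2 of \cite{RW2}) and a subsequence, and this uniformity is used throughout the stopping-time step. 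As written, your proof delivers an increasing c\`{a}dl\`{a}g process in $\aai$ with $A-\Pi^*_pA$ a uniformly integrable martingale, but leaves predictability --- the entire point of the lemma --- unproven.
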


\begin{proof}
Let $t^n_k=k2^{-n}$ for $k,n\ge0$. We define
\begin{align}
  A^n_t &= A_{t^n_k} \textrm{ for }t^n_k\le t<t^n_{k+1}
\end{align}
and
\begin{align}
  B^n_t &= \sum_{i=1}^{k+1} E(A_{t^n_i}-A_{t^n_{i-1}}|\FFF_{t^n_{i-1}})
            \textrm{ for }t^n_k< t\le t^n_{k+1},
\end{align}
and $B^n_0=0$. Note that both $A^n$ and $B^n$ have initial value zero,
since $T>0$. Also note that $A^n$ is c\`{a}dl\`{a}g adapted
and $B^n$ is c\`{a}gl\`{a}d adapted. Put $M^n=A^n-B^n$. Note that
$M^n$ is adapted, but not necessarily c\`{a}dl\`{a}g or c\`{a}gl\`{a}d. Also note
that, with the convention that a sum over an empty index set is zero,
it holds that
\begin{align}
 A^n_{t^n_k}=A_{t^n_k} \quad\textrm{ and }\quad
 B^n_{t^n_k}=\sum_{i=1}^k E(A_{t^n_i}-A_{t^n_{i-1}}|\FFF_{t^n_{i-1}})
\end{align}
for $k\ge0$. Therefore, $(B_{t^n_k})_{k\ge0}$ is the
compensator of the discrete-time increasing process
$(A_{t^n_k})_{k\ge0}$, see Theorem II.54 of \cite{RW1}, so
$(M^n_{t^n_k})_{k\ge0}$ is a discrete-time martingale with initial
value zero. We next show
that each element in this sequence of discrete-time martingales is bounded in
$\LLL^2$, and the limit variables constitute a sequence bounded in
$\LLL^2$ as well, this will allow us to apply Lemma \ref{lemma:Mazur}. To this
end, note that since $B^n$ has initial value zero,
\begin{align}
  (B^n_{t^n_k})^2
  &=2(B^n_{t^n_k})^2-\sum_{i=0}^{k-1}(B^n_{t^n_{i+1}})^2-(B^n_{t^n_i})^2\notag\\
  &=\sum_{i=0}^{k-1}2B^n_{t^n_k}(B^n_{t^n_{i+1}}-B^n_{t^n_i})-(B^n_{t^n_{i+1}})^2+(B^n_{t^n_i})^2\notag\\
  &=\sum_{i=0}^{k-1}2(B^n_{t^n_k}-B^n_{t^n_i})(B^n_{t^n_{i+1}}-B^n_{t^n_i})-(B^n_{t^n_{i+1}}-B^n_{t^n_i})^2\notag\\
  &\le\sum_{i=0}^{k-1}2(B^n_{t^n_k}-B^n_{t^n_i})(B^n_{t^n_{i+1}}-B^n_{t^n_i}).
\end{align}
Now let $c$ be a bound for $\xi$. Applying that $B^n_{t^n_{i+1}}$ is $\FFF_{t^n_i}$ measurable, the
martingale property of $(M^n_{t^n_k})_{k\ge0}$ and the fact that $A$
and $B$ are increasing and $A$ is bounded by $c$, we find
\begin{align}
  E(B^n_{t^n_k}-B^n_{t^n_i})(B^n_{t^n_{i+1}}-B^n_{t^n_i})
  &=E(B^n_{t^n_{i+1}}-B^n_{t^n_i})E(B^n_{t^n_k}-B^n_{t^n_i}|\FFF_{t^n_i})\notag\\
  &=E(B^n_{t^n_{i+1}}-B^n_{t^n_i})E(A^n_{t^n_k}-A^n_{t^n_i}|\FFF_{t^n_i})\notag\\
  &\le cE(B^n_{t^n_{i+1}}-B^n_{t^n_i}).
\end{align}
All in all, we find $E(B^n_{t^n_k})^2\le
2c\sum_{i=0}^{k-1}E(B^n_{t^n_{i+1}}-B^n_{t^n_i})=2cEB^n_{t^n_k}=2cEA^n_{t^n_k}\le
2c^2$. Thus $E(M^n_{t^n_k})^2\le
4E(A^n_{t^n_k})^2+4E(B^n_{t^n_k})^2\le 12c^2$. We conclude that
$(M^n_{t^n_k})_{k\ge0}$ is bounded in $\LLL^2$, and so convergent
almost surely and in $\LLL^2$ to a limit $M^n_\infty$, and the
sequence $(M^n_\infty)_{n\ge0}$ is bounded in $\LLL^2$ as well.

By Lemma \ref{lemma:Mazur}, there exists a sequence of
naturals $(K_n)$ with $K_n\ge n$ and for each $n$ a finite sequence of reals
$\lambda^n_n,\ldots,\lambda^n_{K_n}$ in the unit interval summing to
one, such that $\sum_{i=n}^{K_n}\lambda^n_i M^i_\infty$ is convergent
in $\LLL^2$ to some variable $M_\infty$. By Theorem II.70.2 of \cite{RW2}, there is $M\in\msq$ such that $E\sup_{t\ge0}(M_t-\sum_{i=n}^{K_n}\lambda^n_iM^i_t)^2$ tends
to zero, $M$ is a c\`{a}dl\`{a}g version of the process $t\mapsto
E(M_\infty|\FFF_t)$. By picking a subsequence and relabeling, we may
further assume that $\sup_{t\ge0}(M_t-\sum_{i=n}^{K_n}\lambda^n_iM^i_t)^2$ also converges
almost surely to zero. Define $B=A-M$, we wish to argue that there is
a modification of $B$ satisfying the requirements of the lemma.

First put $C^n = \sum_{i=n}^{K_n}\lambda^n_iB^i$. Note that $C^n$ is
c\`{a}dl\`{a}g, adapted and increasing, and
\begin{align}
  \lim_{t\to\infty}C^n_t
  &=\lim_{m\to\infty}C^n_m
    =\lim_{m\to\infty}\sum_{i=n}^{K_n}\lambda^n_iB^i_m\notag\\
  &=\lim_{m\to\infty}A_m-\sum_{i=n}^{K_n}\lambda^n_iM^i_m
    =A_\infty -\sum_{i=n}^{K_n}\lambda^n_iM^i_\infty,
\end{align}
showing that $C^n\in\aai$ and that $(C^n_\infty)_{n\ge0}$ is bounded in
$\LLL^2$. Also note that for each $q\in\DD_+$, it holds that $A_q = \lim_{n\to\infty} A^n_q$ almost
surely. Therefore,
\begin{align}
  B_q = A_q-M_q
      = \lim_{n\to\infty}A^n_q-\sum_{i=n}^{K_n}\lambda^n_iM^i_q
      = \lim_{n\to\infty}\sum_{i=n}^{K_n}\lambda^n_iB^i_q
      = \lim_{n\to\infty}C^n_q,
\end{align}
almost surely. From this, we obtain that $B$ is almost surely increasing on
$\DD_+$. As $B$ is c\`{a}dl\`{a}g, this shows that $B$ is almost surely
increasing on all of $\RR_+$. Next, we show that $B_t =
\limsup_{n\to\infty}C^n_t$ almost surely, simultaneuously for all
$t\ge0$, this will allow us to show that $B$ has a predictable modification. To this end, note that for $t\ge0$ and $q\in\DD_+$ with
$q\ge t$, $\limsup_{n\to\infty} C^n_t \le \limsup_{n\to\infty} C^n_q=
B_q$. As $B$ is c\`{a}dl\`{a}g, this yields $\limsup_{n\to\infty}C^n_t\le
B_t$. This holds almost surely for all $t\in\RR_+$ simultaneously.
Similarly, $\liminf_{n\to\infty}C^n_t\ge
B_{t-}$ almost surely, simultaneously
for all $t\ge0$. All in all, we conclude that almost surely, $B_t =
\limsup_{n\to\infty}C^n_t$ for all continuity points $t$ of $B$, simultaneously
for all $t\ge0$. As the jumps of $B$ can be exhausted by a
countable sequence of stopping times, we find that in order to show
the desired result on the limes superior, it suffices to show for any
stopping time $S$ that $B_S=\limsup_{n\to\infty} C^n_S$.

Fixing a stopping time $S$, we first note that as $0\le C_S^n\le
C^n_\infty$, the sequence of variables $(C_S^n)_{n\ge0}$ is bounded in $\LLL^2$ and thus in particular
uniformly integrable. Therefore, by Lemma \ref{lemma:ReverseUIFatou},
$\limsup_{n\to\infty} EC^n_S\le E\limsup_{n\to\infty} C^n_S\le
EB_t$. As $\limsup_{n\to\infty} C^n_S\le
B_t$ almost surely, we find that to show $\limsup_{n\to\infty}
C^n_S=B_S$ almost surely, it suffices to show that
$EC^n_S$ converges to $EB_S$, and to this end, it suffices to show
that $EB^n_S$ converges to $EB_S$. Now define $S_n$ by putting $S_n=\infty$
whenever $S=\infty$ and $S_n=t^n_k$ whenever $t^n_{k-1}<S\le
t^n_k$. $(S_n)$ is then a sequence of stopping times taking values in
$\DD_+$ and infinity and converging downwards to $S$, and
\begin{align}
  B^n_S =\sum_{k=0}^\infty B^n_{t^n_{k+1}}1_{(t^n_k<S\le t^n_{k+1})}
  =\sum_{k=0}^\infty B^n_{t^n_{k+1}}1_{(S_n = t^n_{k+1})} =B^n_{S_n}.
\end{align}
As $A$ is c\`{a}dl\`{a}g and bounded and $A^n_{S_n}=A_{S_n}$, the dominated convergence theorem allows
us to obtain
\begin{align}
  \lim_{n\to\infty} EB^n_S
  =\lim_{n\to\infty} EB^n_{S_n}
  =\lim_{n\to\infty} EA_{S_n}-EM^n_{S_n}
  =\lim_{n\to\infty} EA_S-EM_S
  =EB_S.
\end{align}
Recalling our earlier observations, we may now conclude that $\limsup_{n\to\infty}C^n_t=B_t$
almost surely for all points of discontinuity of $B$, and so all in
all, the result holds almost surely for all $t\in\RR_+$
simultaneously.

We now apply this to show that $B$ has a predictable modification. Let
$F$ be the almost sure set where $B=\limsup_{n\to\infty}C^n$. Theorem 3.33 of
\cite{HWY} then shows that $1_FC^n$ is a predictable c\`{a}dl\`{a}g process,
and $B1_F=\limsup_{n\to\infty}C^n$. Therefore, $B1_F$ is a predictable
c\`{a}dl\`{a}g process, and $B$ is almost surely increasing as well. Now let $\Pi^*_pA$ be a
modification of $B$ such that $\Pi^*_pA$ is in $\aai$. Again using Theorem 3.33 of
\cite{HWY}, $\Pi^*_pA$ is predictable since $B$ is predictable, and as
$A-\Pi^*_pA$ is a modification of the uniformly integrable martingale $A-B$, we conclude that $\Pi^*_pA$
satisfies all the requirements to be the compensator of $A$.
\end{proof}

With Lemma \ref{lemma:SimpleCompensator} in hand, the remainder of
the proof for the existence of the compensator merely consists of
monotone convergence arguments.

\begin{lemma}
\label{lemma:CompensatorMonotone}
Let $A^n$ be a sequence of processes in $\aai$ such that
$\sum_{n=1}^\infty A^n$ converges pointwise to a process $A$. Assume
for each $n\ge1$ that $B^n$ is a predictable element of $\aai$
such that $A^n-B^n$ is a uniformly integrable martingale. $A$ is then in $\aai$, and
$\sum_{n=1}^\infty B^n$ almost surely converges pointwise to a
predictable process $\Pi^*_pA$ in $\aai$ such that $A-\Pi^*_pA$ is a
uniformly integrable martingale.
\end{lemma}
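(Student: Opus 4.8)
The plan is to reduce everything to the observation that $A=\sum_n A^n$ and the candidate $\sum_n B^n$ are series of nonnegative increasing processes whose summands have summable expectations, and then to upgrade the resulting pointwise limits to the required regularity and martingale properties. Write $M^n=A^n-B^n$ and, for $N\ge1$, put $S^N=\sum_{n=1}^N A^n$, $\Sigma^N=\sum_{n=1}^N B^n$ and $L^N=S^N-\Sigma^N=\sum_{n=1}^N M^n$. Since each $M^n$ is a uniformly integrable martingale with $M^n_0=0$, the martingale property gives $EA^n_t=EB^n_t$ for every $t\in[0,\infty]$. I would first settle $A\in\aai$. The process $A$ is adapted with initial value zero, and as a pointwise limit of the increasing processes $S^N$ it is increasing; monotone convergence in $t$ and in $n$ gives $A_\infty=\sum_n A^n_\infty$, so by Tonelli $EA_\infty=\sum_n EA^n_\infty=\sum_n EB^n_\infty$, and this finite total is exactly the quantity that will control every convergence below, so in particular $A$ is integrable.

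The one genuinely delicate point at this stage is that $A$ is c\`{a}dl\`{a}g, since a monotone limit of increasing c\`{a}dl\`{a}g processes need not be right-continuous in general. I would argue this pathwise on the almost sure set where every $A^n$ is c\`{a}dl\`{a}g and $\sum_n A^n_q<\infty$ for all $q\in\DD_+$; on this set $A_t\le A_q<\infty$ whenever $q>t$. Fixing such an $\omega$ and a time $t$, as $s\downarrow t$ each $A^n_s$ decreases to $A^n_t$, while for a rational $q>t$ we have $A^n_s\le A^n_q$ for $s\in(t,q)$ and $\sum_n A^n_q=A_q<\infty$, so dominated convergence for series yields $A_{t+}=\sum_n A^n_t=A_t$. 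Left limits exist because $A$ is increasing, so $A$ is c\`{a}dl\`{a}g and hence $A\in\aai$. It is precisely the monotonicity of each summand, not merely of the partial sums, that rescues right-continuity here.

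Next I would define $\Pi^*_pA=\sum_{n=1}^\infty B^n=\sup_N\Sigma^N$ and verify it lies in $\aai$ and is predictable. Since $E(\Pi^*_pA)_\infty=\sum_n EB^n_\infty=EA_\infty<\infty$, the increasing limit $(\Pi^*_pA)_\infty$ is almost surely finite, so the series converges pointwise almost surely; the same dominated-convergence-for-series argument shows the limit is increasing and c\`{a}dl\`{a}g, with initial value zero and $E(\Pi^*_pA)_\infty=EA_\infty$, so it is integrable. Predictability is inherited from the partial sums: each $\Sigma^N$ is predictable and the pointwise supremum $\sup_N\Sigma^N$ is measurable with respect to the predictable $\sigma$-algebra, so after modifying on the evanescent set where finiteness or right-continuity fails we obtain a predictable process $\Pi^*_pA\in\aai$.

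It then remains to show that $A-\Pi^*_pA=\sum_n M^n$ is a uniformly integrable martingale, and here the key is $\LLL^1$-convergence of the partial-sum martingales $L^N$ uniformly in time. For every $t\in[0,\infty]$,
\[
E\big|L^N_t-(A-\Pi^*_pA)_t\big|=E\Big|\sum_{n>N}M^n_t\Big|\le\sum_{n>N}\big(EA^n_t+EB^n_t\big)=2\sum_{n>N}EA^n_t\le 2\sum_{n>N}EA^n_\infty,
\]
and the right-hand side is the tail of a convergent series, hence tends to zero independently of $t$. Each $L^N$ is a uniformly integrable martingale closed by $L^N_\infty$, so $L^N_t=E(L^N_\infty\mid\FFF_t)$; letting $N\to\infty$ and using that conditional expectation is an $\LLL^1$-contraction gives $(A-\Pi^*_pA)_t=E((A-\Pi^*_pA)_\infty\mid\FFF_t)$, a martingale closed on the right by the integrable variable $(A-\Pi^*_pA)_\infty$, which is therefore uniformly integrable. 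I expect the main obstacle to be the regularity of the pointwise limits---verifying that $A$ and $\Pi^*_pA$ are c\`{a}dl\`{a}g and that $\Pi^*_pA$ admits a predictable c\`{a}dl\`{a}g modification---rather than this final martingale bookkeeping, which becomes routine once the uniform $\LLL^1$ bound is available.
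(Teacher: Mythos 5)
Your proof is correct, and while it follows the paper's overall scheme (sum the $B^n$, upgrade the pointwise limit, verify the martingale property), it diverges genuinely at the two decisive steps, in both cases taking a more elementary route. For predictability, the paper first upgrades pointwise convergence of $\sum_{k=1}^n B^k$ to \emph{uniform} convergence on $\RR_+$ (via Lemma \ref{lemma:IncreasingCadlagSumConv}), precisely so that Lemma \ref{lemma:UniformCadlagConvergence} yields $\Delta(\Pi^*_pA)_T=\lim_n\sum_{k=1}^n\Delta B^k_T$ at every stopping time $T$, and then invokes the characterization of c\`adl\`ag predictable processes (Theorem 3.33 of \cite{HWY}): jumps vanish a.s.\ at totally inaccessible times and are $\FFF_{T-}$-measurable at predictable times. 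You bypass the jump analysis entirely: the pointwise supremum of the predictable partial sums is predictable as a $[0,\infty]$-valued map, and a jointly measurable modification on an evanescent set remains predictable. That last fact is where you silently use the usual conditions and it deserves a line of justification (any $\BBB(\RR_+)\otimes\FFF$-measurable set contained in $\RR_+\times N$ with $N$ null is predictable, since $N\in\FFF_0$ and the argument reduces to rectangles); granting it, your route is shorter and needs only pointwise, not uniform, convergence. For the martingale property, the paper checks $E(\Pi^*_pA)_T=EA_T$ for all stopping times via optional sampling and monotone convergence and cites Theorem 77.6 of \cite{RW2}, whereas you prove the uniform-in-$t$ $\LLL^1$ bound $E|L^N_t-(A-\Pi^*_pA)_t|\le 2\sum_{n>N}EA^n_\infty$ and pass the closed-martingale identity $L^N_t=E(L^N_\infty\mid\FFF_t)$ to the limit---equally valid, and free of stopping-time machinery. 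One caveat you share with the paper (whose proof opens with ``Clearly, $A$ is in $\aai$''): the finiteness of $\sum_n EA^n_\infty=EA_\infty$, which drives all of your convergence estimates, does not actually follow from pointwise convergence of $\sum_n A^n$ at finite times --- take $A^n=B^n=1_{\ldb n,\infty\ldb}$, deterministic and predictable with $A^n-B^n=0$, where $A_t=\sum_{n\le t}1$ is finite for every $t$ but $A_\infty=\infty$ --- so both proofs implicitly read the hypothesis as including integrability at infinity; this is harmless in the paper's application in Theorem \ref{theorem:CompensatorExistence}, where $A\in\aai$ is given beforehand, but your assertion that the total is finite should be flagged as an assumption rather than a consequence of Tonelli.
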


\begin{proof}
Clearly, $A$ is in $\aai$. With $B=\sum_{n=0}^\infty B^n$, $B$ is a well-defined process with
values in $[0,\infty]$, since each $B^n$ is nonnegative. We wish to argue
that there is a modification of $B$ which is the
compensator of $A$. First note that as each $B^n$ is increasing and nonnegative, so
is $B$. Also, as $A^n-B^n$ is a uniformly integrable martingale, the optional sampling
theorem and two applications of the monotone convergence theorem
yields for any bounded stopping time $T$ that
\begin{align}
  EB_T=\lim_{n\to\infty}\sum_{k=1}^nEB^k_T
       =\lim_{n\to\infty}\sum_{k=1}^n EA^k_T=EA_T,
\end{align}
which in particular shows that $B$ almost surely takes finite
values. Therefore, by Lemma \ref{lemma:IncreasingCadlagSumConv}, we
obtain that $B$ is almost surely nonnegative, c\`{a}dl\`{a}g and
increasing. Also, by another two applications of the monotone
convergence theorem, we obtain for any stopping time $T$ that
$EB_T=\lim_{t\to\infty}EB_{T\land t}=\lim_{t\to\infty}EA_{T\land
  t}=EA_T$. This holds in particular with $T=\infty$, and therefore,
the limit of $B_t$ as $t$ tends to infinity is almost surely
finite and is furthermore integrable. Lemma
\ref{lemma:IncreasingCadlagSumConv} then also shows that $\sum_{k=1}^n B^k$
converges almost surely uniformly to $B$ on $\RR_+$. 

We now let $\Pi^*_pA$ be a nonnegative c\`{a}dl\`{a}g increasing adapted
modification of $B$. Then $\Pi^*_pA$ is in $\aai$, and
$E(\Pi^*_pA)_T=EA_T$ for all stopping times $T$, so by Theorem 77.6 of
\cite{RW2}, $A-\Pi^*_pA$ is a uniformly integrable martingale. Also, $\sum_{k=1}^n B^k$ almost
surely converges uniformly to $\Pi^*_pA$ on $\RR_+$. In order to
complete the proof, it remains to show that $\Pi^*_pA$ is
predictable. To this end, note that by uniform convergence, Lemma
\ref{lemma:UniformCadlagConvergence} shows that for any stopping time
$T$, $\Delta (\Pi^*_pA)_T=\lim_n \sum_{k=1}^n \Delta B^k_T$. As $B^k$ is
predictable, we find by Theorem 3.33 of \cite{HWY} that if $T$ is
totally inacessible, $\Delta (\Pi^*_pA)_T$ is zero almost surely, and if $T$ is
predictable, $\Delta (\Pi^*_pA)_T$ is $\FFF_{T-}$ measurable. Therefore, Theorem
3.33 of \cite{HWY} shows that $\Pi^*_pA$ is predictable.
\end{proof}

\begin{theorem}
\label{theorem:CompensatorExistence}
Let $A\in\vvl$. There exists a predictable process $\Pi^*_pA$ in
$\vvl$, unique up to indistinguishability, such that $A-\Pi^*_pA$ is a local martingale.
\end{theorem}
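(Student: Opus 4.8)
The plan is to establish uniqueness first and then to reduce the existence statement, in several stages, to the two lemmas already proved.

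For uniqueness, suppose $\Pi$ and $\Pi'$ are predictable processes in $\vvl$ with $A-\Pi$ and $A-\Pi'$ both local martingales. Then $N=\Pi'-\Pi$ is a predictable process in $\vvl$ which is a local martingale with $N_0=0$, and it suffices to show that any such $N$ is indistinguishable from zero. After localisation we may assume that $N$ is a uniformly integrable martingale whose variation process is integrable. I claim $N$ has no jumps: at a totally inaccessible stopping time $T$ predictability gives $\Delta N_T=0$ by Theorem 3.33 of \cite{HWY}, while at a predictable time $T$ the same theorem makes $\Delta N_T$ be $\FFF_{T-}$ measurable; on the other hand, optional sampling along an announcing sequence yields $E(\Delta N_T\mid\FFF_{T-})=0$, so $\Delta N_T=0$ in this case as well. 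Since the jumps of $N$ are exhausted by stopping times, each of which may be taken predictable or totally inaccessible, $N$ is continuous. Finally, a continuous finite-variation local martingale with initial value zero vanishes: stopping so that $N$ and its variation are bounded, the martingale property gives $EN_t^2=E\sum_i(N_{t_{i+1}}-N_{t_i})^2$, which is at most $E[(\max_i|N_{t_{i+1}}-N_{t_i}|)V_N(t)]$ and tends to zero along any refining sequence of partitions by dominated convergence. Hence $N=0$.

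For existence I would reduce in three stages. First, using a localising sequence $(T_k)$ with $V_A^{T_k}$ integrable, it is enough to construct the compensator for each stopped process $A^{T_k}\in\vvi$ and to patch these together; the pieces are consistent because the compensator of a stopped process is the stopped compensator, which follows from the uniqueness established above, and the patched process is then seen to be a compensator of $A$ with localising sequence $(T_k)$. Second, the decomposition $A=\tfrac12(V_A+A)-\tfrac12(V_A-A)$ writes a process in $\vvi$ as a difference of two processes in $\aai$, so by linearity of the construction it suffices to treat $A\in\aai$. Third, for $A\in\aai$ I would split $A=A^c+A^d$ into its continuous part and its jump part $A^d$, where $A^d_t=\sum_{s\le t}\Delta A_s$. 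The continuous part $A^c$ is adapted and continuous, hence predictable, and is therefore its own compensator. For the jump part, I would exhaust the jumps by stopping times $(S_m)$ with disjoint graphs and split each jump $\Delta A_{S_m}$ into bounded nonnegative $\FFF_{S_m}$ measurable pieces, thereby exhibiting $A^d$ as a pointwise convergent sum of simple processes of the form $\xi 1_{\ldb S_m,\infty\ldb}$ treated in Lemma \ref{lemma:SimpleCompensator}. Applying that lemma to each summand and Lemma \ref{lemma:CompensatorMonotone} to the resulting sum, together with the summand $A^c$, produces a predictable process $\Pi^*_pA\in\aai$ with $A-\Pi^*_pA$ a uniformly integrable martingale.

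The main obstacle is the uniqueness step, namely showing that a predictable finite-variation local martingale is indistinguishable from zero; everything else is either a routine reduction or a direct appeal to the two lemmas. Within that step the delicate part is the jump analysis establishing continuity, after which the continuous case is elementary. A secondary point demanding care is the consistency of the locally constructed compensators in the first reduction, but this is precisely where the uniqueness statement is applied.
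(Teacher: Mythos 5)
Your proposal is correct, and its existence half follows the paper's route almost step for step: simple jump processes $\xi 1_{\ldb T,\infty\ldb}$ handled by Lemma \ref{lemma:SimpleCompensator}, integrable jumps split into bounded pieces and summed via Lemma \ref{lemma:CompensatorMonotone}, the continuous part of an $\aai$-process serving as its own compensator, the Jordan decomposition $A=\frac{1}{2}(V_A+A)-\frac{1}{2}(V_A-A)$ (the paper writes $A=A^+-A^-$, the same thing) for $\vvi$, and localisation --- the paper localises last rather than first, an immaterial permutation, and your consistency-by-uniqueness patching (using that a stopped predictable process is predictable) is exactly what is implicit there. The one genuine divergence is uniqueness: the paper disposes of it in one line by citing Theorem 6.3 of \cite{HWY} (a predictable local martingale of finite variation is evanescent), whereas you prove that statement from scratch --- killing jumps at totally inaccessible times via Theorem 3.33 of \cite{HWY}, killing jumps at predictable times via optional sampling along an announcing sequence combined with the $\FFF_{T-}$-measurability of $\Delta N_T$, and then the standard estimate $EN_t^2=E\sum_i(N_{t_{i+1}}-N_{t_i})^2\le E\bigl[\max_i|N_{t_{i+1}}-N_{t_i}|\,(V_N)_t\bigr]$ for the continuous case. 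That inline argument is sound (it is in substance the proof of the cited theorem) and buys self-containedness; the price is that you invoke the accessible/totally-inaccessible dichotomy for the exhausting stopping times and the identity $\FFF_{T-}=\bigvee_n\FFF_{T_n}$ along an announcing sequence, machinery at the same level as the results on predictable times the paper already imports from \cite{HWY} and consistent with the program sketched in Section \ref{section:Discussion}.
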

\begin{proof}
We first consider uniqueness. If $A\in\vvl$ and $B$ and $C$ are two
predictable processes in $\vvl$ such that $A-B$ and $A-C$ both are
local martingales, we find that $B-C$ is a predictable local martingale with paths
of finite variation. By Theorem 6.3 of \cite{HWY}, uniqueness follows.

As for existence, Lemma \ref{lemma:SimpleCompensator} establishes
existence for the case where $A=\xi1_{\ldb T,\infty\ldb}$ where $\xi$
is nonnegative, bounded and $\FFF_T$ measurable. Using Lemma
\ref{lemma:CompensatorMonotone}, this extends to the case where
$\xi\in\LLL^1(\FFF_T)$. For general $A\in\aai$, there exists by
Theorem 3.32 of \cite{HWY} a sequence of
stopping times $(T_n)$ covering the jumps of $A$. Put
$A^d=\sum_{n=1}^\infty \Delta A_{T_n}1_{\ldb T_n,\infty\ldb}$. As
$A\in\aai$, $A^d$ is a well-defined element of $\aai$, and $A-A^d$ is a continuous element of
$\aai$. As we have existence for each $\Delta A_{T_n}1_{\ldb
  T_n,\infty\ldb}$, Lemma \ref{lemma:CompensatorMonotone} allows us to
obtain existence for $A$. Existence for $A\in\vvi$ is
then obtained by decomposing $A=A^+-A^-$, where
$A^+,A^-\in\aai$, and extends to $A\in\vvl$ by a localisation argument.
\end{proof}

From the characterisation in Theorem
\ref{theorem:CompensatorExistence}, the usual properties of the
compensator such as linearity, positivity, idempotency and commutation
with stopping, can then be shown.

\section{The existence of the quadratic variation}

\label{section:QV}

In this section, we will prove the existence of the quadratic
variation process for a local martingale by a reduction to the cases
of bounded martingales and martingales of integrable variation,
applying Lemma \ref{lemma:Mazur} to obtain existence for bounded
martingales. Apart from Lemma \ref{lemma:Mazur}, the proofs will also
use the fundamental theorem of local martingales as well as some
properties of martingales with finite variation. Our method of proof
is direct and is simpler than the methods employed in for example
\cite{OK} or \cite{JS}, where the quadratic covariation is defined
through the integration-by-parts formula and requires the construction
and properties of the stochastic integral.

\begin{lemma}
\label{lemma:BoundedQVExistence}
Let $M$ be a bounded martingale with initial value zero. There exists a process
$[M]$ in $\aai$, unique up to indistinguishability, such that
$M^2-[M]\in\msq$ and $\Delta [M]=(\Delta M)^2$. We call $[M]$ the quadratic variation process of $M$.
\end{lemma}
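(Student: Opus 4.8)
The plan is to mirror the structure of the compensator proof, replacing the discrete-time compensator of an increasing process with the discrete-time quadratic variation of the martingale. Uniqueness is the easy half: if $[M]$ and $[M]'$ both satisfy the requirements, then $[M]-[M]'$ is a predictable process of finite variation (both being increasing) which is also a local martingale, being the difference of $M^2-[M]$ and $M^2-[M]'$; by the same rigidity result (Theorem 6.3 of \cite{HWY}) invoked for the compensator, it vanishes. So the work lies entirely in existence.

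For existence, I would fix the dyadic partition $t^n_k=k2^{-n}$ and form the discrete approximations to the quadratic variation,
\begin{align}
  Q^n_t &= \sum_{i=1}^{k} (M_{t^n_i}-M_{t^n_{i-1}})^2
           \textrm{ for }t^n_k\le t<t^n_{k+1}.
\end{align}
The classical discrete identity $M^2_{t^n_k}-Q^n_{t^n_k}=2\sum_{i=1}^k M_{t^n_{i-1}}(M_{t^n_i}-M_{t^n_{i-1}})$ exhibits $M^2_{t^n_k}-Q^n_{t^n_k}$ as a discrete-time martingale null at zero (here $M$ bounded is essential to keep everything in $\LLL^2$). The goal is to show the family of terminal variables $Q^n_\infty$ is bounded in $\LLL^2$, so that Lemma \ref{lemma:Mazur} produces convex combinations $\sum_{i=n}^{K_n}\lambda^n_i Q^i$ converging in $\LLL^2$ to some $[M]_\infty$, giving a candidate square-integrable martingale via Theorem II.70.2 of \cite{RW2} and hence a candidate $[M]=M^2-N$ for a suitable $N\in\msq$.

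The $\LLL^2$ bound on $Q^n_\infty$ is the step I expect to be the main obstacle, and it is the analogue of the telescoping estimate carried out for $B^n$ in Lemma \ref{lemma:SimpleCompensator}. Writing $c$ for a bound on $|M|$, one uses the martingale property of the increments together with $E((M_{t^n_i}-M_{t^n_{i-1}})^2\mid\FFF_{t^n_{i-1}})=E(M^2_{t^n_i}-M^2_{t^n_{i-1}}\mid\FFF_{t^n_{i-1}})$ to control $E(Q^n_\infty)^2$ by a constant multiple of $c^2\,EQ^n_\infty$ and then by $c^4$, exactly paralleling the bound $E(B^n_{t^n_k})^2\le 2c^2$ obtained there; the orthogonality of martingale increments makes the cross terms vanish. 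Once the bound is in place, the remaining arguments transfer almost verbatim: the convex combinations $C^n=\sum_{i=n}^{K_n}\lambda^n_i Q^i$ are increasing and c\`adl\`ag, $[M]_q=\lim_n C^n_q$ on dyadics forces $[M]$ to be increasing, and the limes superior identity $[M]_t=\limsup_n C^n_t$ (via Lemma \ref{lemma:ReverseUIFatou} and reduction to stopping times $S$ with the approximating $S_n\downarrow S$) yields a predictable modification through Theorem 3.33 of \cite{HWY}.

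Finally I would verify the jump condition $\Delta[M]=(\Delta M)^2$, which is the one genuinely new requirement not present in the compensator statement. Here the discrete quadratic variation is tailor-made: at a fixed stopping time $S$ the approximation $Q^n_{S_n}$ has jump contribution $(M_{S_n}-M_{S_n-2^{-n}})^2$, which converges to $(\Delta M_S)^2$ by right-continuity of $M$ and the construction of $S_n$, and this matches $\Delta[M]_S$ by the uniform-type convergence of $C^n$ to $[M]$ along the lines of Lemma \ref{lemma:UniformCadlagConvergence}. With existence established for bounded martingales, the section's stated reduction---splitting a general local martingale into a bounded part and a part of integrable variation via the fundamental theorem of local martingales---then extends the result to the general case.
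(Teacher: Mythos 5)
Your overall architecture is the paper's: dyadic discrete approximations, the identity $M^2_{t^n_k}=2\sum_i M_{t^n_{i-1}}(M_{t^n_i}-M_{t^n_{i-1}})+Q^n_{t^n_k}$, an $\LLL^2$ bound (your estimate on $E(Q^n_\infty)^2$ is equivalent to the paper's bound on the martingale part $N^n_\infty=M^2_\infty-Q^n_\infty$, which the paper gets more cleanly from orthogonality of increments), then Lemma \ref{lemma:Mazur} and Theorem II.70.2 of \cite{RW2}. But there are two genuine errors. First, your uniqueness argument is wrong as stated: you declare $[M]-[M]'$ predictable ``both being increasing,'' but an increasing c\`adl\`ag adapted process is not predictable in general, so Theorem 6.3 of \cite{HWY} does not apply as you invoke it. The correct route --- and the paper's --- uses the jump hypothesis: since $\Delta[M]=\Delta[M]'=(\Delta M)^2$, the difference is a \emph{continuous} finite variation martingale, and only then does Theorem 6.3 of \cite{HWY} force it to vanish. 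This is not a cosmetic repair: your argument never uses the jump condition, and without that condition uniqueness is false --- for a bounded martingale whose jumps occur at totally inaccessible times, both $[M]$ and its compensator $\langle M\rangle=\Pi^*_p[M]$ make $M^2-\cdot$ a martingale, yet they are not indistinguishable.

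The same confusion derails your existence plan at the last step: you propose to port the compensator proof's limes-superior machinery so that Theorem 3.33 of \cite{HWY} ``yields a predictable modification.'' But $[M]$ is \emph{not} predictable in general (its jumps sit at the jump times of $M$, which may be totally inaccessible), and the mechanism that made the argument work in Lemma \ref{lemma:SimpleCompensator} is absent here: there the approximants $B^n$ were c\`agl\`ad, hence predictable, so their limsup was predictable; your $Q^n$ are right-continuous step processes whose jump at $t^n_k$ is only $\FFF_{t^n_k}$-measurable, so $\limsup_n C^n$ is merely adapted and Theorem 3.33 gives nothing. Fortunately none of this apparatus is needed, and the statement does not ask for predictability. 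Because the approximating objects here are martingales bounded in $\LLL^2$, Doob's maximal inequality (via Theorem II.70.2 of \cite{RW2}) upgrades the $\LLL^2$ convergence of the convex combinations to convergence of $E\sup_{t\ge0}(\,\cdot\,)^2$, hence, along a subsequence, to almost sure \emph{uniform} convergence on $\RR_+$. This replaces the entire Lemma \ref{lemma:ReverseUIFatou}/stopping-time/limsup reduction: monotonicity follows from the dyadic comparison you describe, the jump identity $\Delta A_T=(\Delta M_T)^2$ follows at an arbitrary stopping time from the computation you sketch (with $s(T,i)\uparrow T$ and $|s(T,i)-T|\le2^{-i}$, the cross term $2\sum_{i=n}^{K_n}\lambda^n_i(M_{T-}-M_{s(T,i)})$ vanishes pointwise), and the final modification is obtained simply as $[M]=A1_{F^c}$, where $F$ is the null set on which $A=M^2-N$ fails to be increasing. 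With the uniqueness argument corrected and the predictability step deleted, the rest of your outline goes through.
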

\begin{proof}
We first consider uniqueness. Assume that $A$ and $B$ are two
processes in $\aai$ such that $M^2-A$ and $M^2-B$ are in $\msq$ and
$\Delta A=\Delta B=(\Delta M)^2$. In particular, $A-B$ is a continuous
element of $\msq$ and has paths of finite variation, so Theorem 6.3 of
\cite{HWY} shows that $A-B$ is almost surely zero, such that
$A$ and $B$ are indistinguishable. This proves uniqueness. Next, we consider the existence of the process. Let $t^n_k=k2^{-n}$
for $n,k\ge0$, we then find
\begin{align}
  M^2_t &= \sum_{k=1}^\infty M_{t\land t^n_k}^2-M_{t\land t^n_{k-1}}^2\notag\\
        &= 2\sum_{k=1}^\infty M_{t\land t^n_{k-1}}(M_{t\land t^n_k}-M_{t\land t^n_{k-1}})+\sum_{k=1}^\infty (M_{t\land t^n_k}-M_{t\land t^n_{k-1}})^2,
\end{align}
where the terms in the sum are zero from a point onwards, namely for
such $k$ that $t^n_{k-1}\ge t$. Define $N^n_t = 2\sum_{k=1}^\infty M_{t\land t^n_{k-1}}(M_{t\land
  t^n_k}-M_{t\land t^n_{k-1}})$. Our plan for the proof is to show
that $(N^n)$ is a bounded sequence in $\msq$. This will allow us to apply Lemma
\ref{lemma:Mazur} in order to obtain some $N\in\msq$ which is the limit of appropriate convex combinations of
the $(N^n)$. We then show that by putting $[M]$ equal to a
modification of $M^2-N$, we obtain a process with the desired qualities.

We first show that $N^n$ is a martingale by applying Theorem 77.6 of \cite{RW2}. Clearly, $N^n$ is c\`{a}dl\`{a}g and adapted with
initial value zero, and so it suffices to prove that $N^n_T$ is integrable and that
$EN^n_T=0$ for all bounded stopping times $T$. To this end, note that as $M$ is bounded, there is
$c>0$ such that $|M_t|\le c$ for all $t\ge0$. Then $N^n_T$ is clearly integrable, as it is the sum of
finitely many terms each bounded by $4c^2$, and we have
\begin{align}
  EN^n_T&=E\sum_{k=1}^\infty M_{T\land t_{k-1}^n}(M_{T\land t^n_k}-M_{T\land t^n_{k-1}})\\
  &=\sum_{k=1}^\infty EM^T_{t_{k-1}^n}(M^T_{t^n_k}-M^T_{t^n_{k-1}})
  =\sum_{k=1}^\infty EM^T_{t_{k-1}^n}E(M^T_{t^n_k}-M^T_{t^n_{k-1}}|\FFF_{t_{k-1}^n}),\notag
\end{align}
where the interchange of summation and expectation is allowed, as the
only nonzero terms in the sum are for those $k$ such that
$t^n_{k-1}\le t$, and there are only finitely many such terms. As
$M^T$ is a martingale,
$E(M^T_{t^n_k}-M^T_{t^n_{k-1}}|\FFF_{t_{k-1}^n})=0$ by
optional sampling, so the above is
zero and $N^n$ is a martingale by Theorem 77.6 of \cite{RW2}. Next, we show that $N^n$ is bounded in
$\LLL^2$. Fix $k\ge1$, we first consider a bound for the second moment of
$N^n_{t^n_k}$. To obtain this, note that for $i<j$,
\begin{align}
  &E(M_{t^n_{i-1}}(M_{t^n_i}-M_{t^n_{i-1}}))(M_{t^n_{j-1}}(M_{t^n_j}-M_{t^n_{j-1}}))\notag\\
  &=E(M_{t^n_{i-1}}(M_{t^n_i}-M_{t^n_{i-1}})E(M_{t^n_{j-1}}(M_{t^n_j}-M_{t^n_{j-1}})|\FFF_{t^n_i})\notag\\
  &=E(M_{t^n_{i-1}}(M_{t^n_i}-M_{t^n_{i-1}})M_{t^n_{j-1}}E(M_{t^n_j}-M_{t^n_{j-1}}|\FFF_{t^n_i}),
\end{align}
which is zero, as $E(M_{t^n_j}-M_{t^n_{j-1}}|\FFF_{t^n_i})=0$, and
by the same type of argument, we obtain
$E(M_{t^n_i}-M_{t^n_{i-1}})(M_{t^n_j}-M_{t^n_{j-1}})=0$. In other
words, the variables are pairwisely orthogonal, and so
\begin{align}
  E(N^n_{t^n_k})^2
  &= E\left(\sum_{i=1}^k M_{t^n_{i-1}}(M_{t^n_i}-M_{t^n_{i-1}})\right)^2
  =\sum_{i=1}^k E\left(M_{t^n_{i-1}}(M_{t^n_i}-M_{t^n_{i-1}})\right)^2\notag\\
  &\le c^2\sum_{i=1}^k E(M_{t^n_i}-M_{t^n_{i-1}})^2
  =c^2 E\left(\sum_{i=1}^k M_{t^n_i}-M_{t^n_{i-1}}\right)^2
  = c^2EM_{t^n_k}^2,
\end{align}
which yields $\sup_{t\ge0}E(N^n_t)^2= \sup_{k\ge1} E(N^n_{t^n_k})^2\le
\sup_{k\ge1}c^2EM_{t^n_k}^2\le 4c^2EM_\infty^2$, and this is finite. Thus,
$N^n\in\msq$, and $E(N^n_\infty)^2=\lim_t E(N^n_t)^2\le
4c^2 EM_\infty^2$, so $(N^n_\infty)_{n\ge1}$ is bounded in $\LLL^2$.

Now, by Lemma \ref{lemma:Mazur}, there exists a sequence of
naturals $(K_n)$ with $K_n\ge n$ and for each $n$ a finite sequence of reals
$\lambda^n_n,\ldots,\lambda^n_{K_n}$ in the unit interval summing to
one, such that $\sum_{i=n}^{K_n}\lambda^n_iN^i_\infty$ is convergent
in $\LLL^2$ to some variable $N_\infty$. It then holds
that there is $N\in\msq$ such that $E\sup_{t\ge0}(N_t-\sum_{i=n}^{K_n}\lambda^n_iN^i_t)^2$ tends
to zero. By picking a subsequence and relabeling, we may assume
without loss of generality that we also have almost sure
convergence. Define $A=M^2-N$, we claim that there is a modification
of $A$ satisfying the criteria of the theorem.

To prove this, first note that as $M^2$ and $N$ are c\`{a}dl\`{a}g and
adapted, so is $A$. We want to show that $A$ is almost surely
increasing and that $\Delta A=(\Delta M)^2$ almost
surely. We first consider the
jumps of $A$. To prove that $\Delta A=(\Delta M)^2$ almost surely, it
suffices to show that $\Delta A=(\Delta M_T)^2$ almost surely for any bounded stopping time
$T$. Let $T$ be any bounded stopping time. Since $\sup_{t\ge0}(N_t-\sum_{i=n}^{K_n}\lambda^n_iN^i_t)^2$ converges to
zero almost surely, we find
\begin{align}
A_T &= M_T^2 - N_T
      = \lim_{n\to\infty} \sum_{i=n}^{K_n}\lambda^n_i (M_T^2 - N^i_T)\notag\\
      &= \lim_{n\to\infty} \sum_{i=n}^{K_n}\lambda^n_i
      \sum_{k=1}^\infty (M_{T\land t^i_k}-M_{T\land t^i_{k-1}})^2,
\end{align}
almost surely. Similarly,
\begin{align}
  \Delta A_T
  =\lim_{n\to\infty} \sum_{i=n}^{K_n}\lambda^n_i  \sum_{k=1}^\infty
  (M_{t\land t^i_k}-M_{t\land t^i_{k-1}})^2-(M_{(t\land
    t^i_k)-}-M_{(t\land t^i_{k-1})-})^2,
\end{align}
understanding that $M_{(t\land t^i_k)-}$ is the limit of $M_{s\land
  t^i_k}$ with $s$ tending to $t$ strictly from below, and similarly
for $M_{(t\land t^i_{k-1})-}$. Fix $i,k\ge0$. By inspection, if $t\le
t^i_{k-1}$ or $t>t^i_k$, it holds that $(M_{t\land t^i_k}-M_{t\land   t^i_{k-1}})^2-(M_{(t\land t^i_k)-}-M_{(t\land t^i_{k-1})-})^2$ is
zero. In the case where $t$ is such that $t^i_{k-1}<t\le t^i_k$, we
instead obtain
\begin{align}
  (M_{t\land t^i_k}-M_{t\land t^i_{k-1}})^2&=(M_t-M_{t^i_{k-1}})^2\\
  (M_{(t\land t^i_k)-}-M_{(t\land t^i_{k-1})-})^2&=(M_{t-}-M_{t^i_{k-1}})^2,
\end{align}
so that with $s(t,i)$ denoting the unique
$t^i_{k-1}$ such that $t^i_{k-1}<t\le t^i_k$, we have
\begin{align}
  \Delta A_T
  &=\lim_{n\to\infty} \sum_{i=n}^{K_n}\lambda^n_i  (M_T-M_{s(T,i)})^2-(M_{T-}-M_{s(T,i)})^2\notag\\
  &=\lim_{n\to\infty} \sum_{i=n}^{K_n}\lambda^n_i  (M_T^2-2M_TM_{s(T,i)}-M_{T-}^2+2M_{T-}M_{s(T,i)})\notag\\
  &=(\Delta M_T)^2+2\lim_{n\to\infty} \sum_{i=n}^{K_n}\lambda^n_i  (M_{T-}-M_{s(T,i)}).
\end{align}
Now, we always have $|s(T,i)-T|\le 2^{-i}$ and $s(T,i)<T$. Therefore, given $\eps>0$,
there is $n\ge1$ such that for all $i\ge n$,
$|M_{T-}-M_{s(T,i)}|\le\eps$. As the $(\lambda^n_i)_{n\le i\le K_n}$
are convex weights, we obtain for $n$ this large that
$|\sum_{i=n}^{K_n}\lambda^n_i  (M_{T-}-M_{s(T,i)})|\le \eps$. This
allows us to conclude that $\sum_{i=n}^{K_n}\lambda^n_i
(M_{T-}-M_{s(T,i)})$ converges pointwise to zero, and so $\Delta A_T = (\Delta
M_T)^2$ almost surely. Since this holds for any arbitrary stopping
time, we now obtain $\Delta A=(\Delta M)^2$ up to indistinguishability.

Next, we show that $A$ is almost surely increasing. Let
$\DD_+=\{k2^{-n}|k\ge0,n\ge 1\}$, then $\DD_+$ is dense in
$\RR_+$. Let $p,q\in\DD_+$ with $p\le q$, we will show that $A_p\le A_q$ almost
surely. There exists $j\ge1$ and naturals $n_p\le n_q$ such that $p=n_p2^{-j}$
and $q=n_q2^{-j}$. We know that $A_p = \lim_{n\to\infty} \sum_{i=n}^{K_n}\lambda^n_i  \sum_{k=1}^\infty (M_{p\land
  t^i_k}-M_{p\land t^i_{k-1}})^2$,
and analogously for $A_q$. For $i\ge j$, $p\land t^i_k = n_p2^{-j}\land k2^{-i}=(n_p2^{i-j}\land k)2^{-i}$, and analogously for $q\land
t^i_k$. Therefore, we obtain that almost surely,
\begin{align}
  \lim_{n\to\infty} \sum_{i=n}^{K_n}\lambda^n_i  \sum_{k=1}^\infty (M_{p\land t^i_k}-M_{p\land t^i_{k-1}})^2
  &=   \lim_{n\to\infty} \sum_{i=n}^{K_n}\lambda^n_i  \sum_{k=1}^{n_p2^{i-j}} (M_{t^i_k}-M_{t^i_{k-1}})^2\notag\\
  &\le \lim_{n\to\infty} \sum_{i=n}^{K_n}\lambda^n_i  \sum_{k=1}^{n_q2^{i-j}} (M_{t^i_k}-M_{t^i_{k-1}})^2\notag\\
  &=\lim_{n\to\infty} \sum_{i=n}^{K_n}\lambda^m_i  \sum_{k=1}^\infty (M_{q\land t^i_k}-M_{q\land t^i_{k-1}})^2,
\end{align}
allowing us to make the same calculations in reverse and conclude $A_p\le A_q$ almost surely. As $\DD_+$ is countable, we conclude that
$A$ is inceasing on $\DD_+$ almost surely, and as $A$ is c\`{a}dl\`{a}g, we
conclude that $A$ is increasing almost surely. Furthermore, as we have that
$A_\infty=M^2_\infty-N_\infty$ and both $M^2_\infty$ and $N_\infty$
are integrable, we conclude that $A_\infty$ is integrable.

Finally, let $F$ be the null set where $A$ is not increasing. Put
$[M]=A1_{F^c}$. As we all null sets are in $\FFF_t$
for $t\ge0$, $[M]$ is adapted as $A$ is adapted. Furthermore, $[M]$ is
c\`{a}dl\`{a}g, increasing and $[M]_\infty$ exists and is
integrable. As $M^2 - [M] = N+A1_F$, where $A1_F$ is
almost surely zero and therefore in $\msq$, we now have constructed a process
$[M]$ which is in $\aai$ such that $M^2-[M]$ is in $\msq$ and
$\Delta[M]=(\Delta M)^2$ up to indistinguishability. This concludes
the proof.
\end{proof}

\begin{theorem}
Let $M$ be a local martingale with initial value zero. There exists
$[M]\in\aaf$ such that $M^2-[M]$ is a local martingale with initial
value zero and $\Delta [M]=(\Delta M)^2$.
\end{theorem}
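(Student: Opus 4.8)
The plan is to reduce the general case to the two situations already under control---bounded martingales, handled by Lemma \ref{lemma:BoundedQVExistence}, and local martingales of finite variation---by means of the fundamental theorem of local martingales. That theorem lets us write $M = M' + A$, where $M'$ is a local martingale whose jumps are bounded by a constant and $A$ is a local martingale lying in $\vvl$. Since the statement asks only for existence, no separate uniqueness argument is needed here; I would, however, record in passing that uniqueness follows exactly as in Lemma \ref{lemma:BoundedQVExistence}, via Theorem 6.3 of \cite{HWY}.

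First I would construct $[M']$. A local martingale with jumps bounded by $c$ is locally bounded: putting $T_n = \inf\{t \ge 0 \mid |M'_t| \ge n\}$, the stopped process $(M')^{T_n}$ is bounded by $n + c$ and hence, being a bounded local martingale, a bounded martingale. Lemma \ref{lemma:BoundedQVExistence} then supplies $[(M')^{T_n}] \in \aai$, and the uniqueness part of that lemma shows these are consistent under further stopping, so they glue to a process $[M'] \in \aaf$ with $(M')^2 - [M']$ a local martingale and $\Delta [M'] = (\Delta M')^2$. For the finite variation part I would set $[A]_t = \sum_{0 < s \le t} (\Delta A_s)^2$, which lies in $\aaf$ because $\sum_{0<s\le t}(\Delta A_s)^2 \le (V_A)_t^2 < \infty$. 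The Lebesgue--Stieltjes integration-by-parts formula gives $A^2_t = 2\int_{(0,t]} A_{s-} \df{A_s} + [A]_t$ pathwise, and since $A$ is a local martingale in $\vvl$ and $A_-$ is locally bounded and predictable, the Stieltjes integral $\int_{(0,\cdot]} A_{s-}\df{A_s}$ is again a local martingale by the properties of finite variation martingales; hence $A^2 - [A]$ is a local martingale and $\Delta[A] = (\Delta A)^2$.

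The main obstacle is the cross term. Set $C_t = \sum_{0<s\le t}\Delta M'_s \Delta A_s$ and define
\begin{align}
  [M] = [M'] + 2C + [A].
\end{align}
Expanding $M^2 = (M')^2 + 2M'A + A^2$, the desired identity reduces to showing that $M'A - C$ is a local martingale, after which $M^2 - [M]$ is a sum of three local martingales. To prove this without invoking the stochastic integral, I would localise so that $M'$ is bounded and $A \in \vvi$, and apply summation by parts along the dyadic grid: writing $M'_tA_t$ as a telescoping sum and grouping terms yields $M'_tA_t = \sum_k A_{t_{k-1}}(M'_{t_k}-M'_{t_{k-1}}) + \sum_k M'_{t_{k-1}}(A_{t_k}-A_{t_{k-1}}) + \sum_k (M'_{t_k}-M'_{t_{k-1}})(A_{t_k}-A_{t_{k-1}})$. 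The first sum is a martingale, the second converges to the Stieltjes integral $\int_{(0,t]} M'_{s-}\df{A_s}$ (itself a local martingale as above), and the third converges, using the finite variation of $A$, to $C_t$; a limiting argument in the spirit of Lemma \ref{lemma:BoundedQVExistence} then identifies $M'A - C$ as a local martingale. Finally I would verify that $[M]\in\aaf$: its jumps are $(\Delta M')^2 + 2\Delta M'\Delta A + (\Delta A)^2 = (\Delta M)^2 \ge 0$, and its continuous part coincides with that of the increasing process $[M']$, since $A$, being of finite variation, contributes no continuous part to the quadratic variation; hence $[M]$ is c\`adl\`ag, adapted and increasing with the required jump identity, completing the proof.
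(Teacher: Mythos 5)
Your proof is correct and follows the same architecture as the paper's: both decompose $M$ via the fundamental theorem of local martingales (Theorem III.29 of \cite{PP}) into a part with bounded jumps, hence locally bounded, and a local martingale of locally integrable variation; both obtain the quadratic variation of the bounded part from Lemma \ref{lemma:BoundedQVExistence} by stopping and gluing; and your $[A]$ and $C$ are exactly the paper's jump-sum processes $A^i$ and $A^x$, combined in the identical formula $[M]=[M']+2C+[A]$, with the same pathwise integration by parts and the same appeal to Theorem 6.5 of \cite{HWY} for $A^2-[A]$. The one genuine divergence is the cross term: the paper disposes of $M'A-C$ by citing Theorem 5.32 of \cite{HWY}, which asserts that $M'_tA_t-\int_0^t M'_s\df{A}_s$ is a local martingale, and then splitting the Stieltjes integral as $\int_0^t\Delta M'_s\df{A}_s+\int_0^t M'_{s-}\df{A}_s=C_t+\int_0^t M'_{s-}\df{A}_s$, whereas you prove the same fact from scratch by summation by parts along the dyadic grid; your route is more self-contained, at the cost of one limiting argument. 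On that point, note that your phrase ``a limiting argument in the spirit of Lemma \ref{lemma:BoundedQVExistence}'' promises both more and less than what is needed: no Mazur-type convex combinations are required here, since the finite variation of $A$ makes all three discrete sums converge pathwise; what must be supplied is why the limit of the discrete martingales $S^n_t=\sum_k A_{t\land t^n_{k-1}}(M'_{t\land t^n_k}-M'_{t\land t^n_{k-1}})$ is again a martingale, and after localising so that $M'$ is bounded by $c$ and $A\in\vvi$, this follows by dominated convergence from your own telescoping identity, which gives $|S^n_t|\le 4c(V_A)_\infty$ (the product term and the two remaining sums are dominated by $c(V_A)_\infty$, $c(V_A)_\infty$ and $2c(V_A)_\infty$ respectively), so that $S^n_t\to M'_tA_t-\int_{(0,t]}M'_{s-}\df{A}_s-C_t$ in $\LLL^1$ for each fixed $t$. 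Finally, your increasingness argument (nonnegative jumps $(\Delta M)^2$ together with the increasing continuous part inherited from $[M']$) is a valid rephrasing of the paper's direct estimate $A_t-A_s\ge\sum_{s<u\le t}(\Delta M_u)^2$, legitimate because the jump sums involved are locally absolutely summable, e.g. $\sum_{0<s\le t}(\Delta A_s)^2\le (V_A)_t^2$ and $\sum_{0<s\le t}(\Delta M'_s)^2\le[M']_t$.
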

\begin{proof}
We first consider the case where $M=M^b+M^i$, where $M^b$ and $M^i$
both are local martingales with initial value zero, $M^b$ is bounded
and $M^i$ is of integrable variation. In this case, $\sum_{0<s\le t}(\Delta M^i_t)^2$ is absolutely
convergent for any $t\ge0$, and we may therefore define a process
$A^i$ in $\aaf$ by putting $A^i_t=\sum_{0<s\le t}(\Delta M^i_t)^2$. As $M^b$ is bounded, $\sum_{0<s\le t}\Delta
M^b_t\Delta M^i_t$ is almost surely absolutely convergent as well, and so we
may define a process $A^x$ in $\vvf$ by putting $A^x_t = \sum_{0<s\le
  t}\Delta M^b_t\Delta M^i_t$. Finally, by Theorem
\ref{lemma:BoundedQVExistence}, there exists a process $[M^b]$ in
$\aai$ such that $(M^b)^2-[M^b]$ is in $\msq$ and $\Delta [M^b]=(\Delta M^b)^2$. We put $A_t =
[M^b]_t +2A^x+A^i$ and claim that there is a modification of $A$ satisfying the criteria in the
theorem.

To this end, first note that $A$ clearly is c\`{a}dl\`{a}g adapted of finite
variation, and for $0\le s\le t$, we have $[M^b]_t\ge
[M^b]_s+\sum_{s<u\le t}(\Delta M^b_u)^2$ almost surely, so that we obtain $A_t-A_s\ge\sum_{s<u\le t}(\Delta M^b_u+\Delta M^i_u)^2$ almost
surely, showing that $A$ is almost surely increasing. To show that
$M^2-A$ is a local martingale, note that
\begin{align}
  M^2-A =(M^b)^2-[M^b]+2(M^bM^i-A^x)+(M^i)^2-A^i.
\end{align}
Here, $(M^b)^2-[M^b]$ is in $\msq$ by Theorem
\ref{lemma:BoundedQVExistence}, in particular a local martingale. By the
integration-by-parts formula, we have $(M^i)^2_t-A^i_t=2\int_0^t
M^i_{s-}\df{M^i}_s$, where the integral is well-defined as $M_{s-}$ is
bounded on compacts. Using Theorem 6.5 of \cite{HWY}, the integral process $\int_0^t M^i_{s-}\df{M^i_s}$ is a local martingale, and so
$(M^i)^2-A^i$ is a local martingale. Therefore, in order to obtain that $M^2-A$
is a local martingale, it suffices to show that $M^bM^i-A^x$ is a
local martingale. By Theorem 5.32 of \cite{HWY}, $M^b_tM^i_t-\int_0^t
M^b_s\df{M^i}_s$ is a local martingale, so it suffices to show that $\int_0^t
M^b_s\df{M^i}_s-A^x_t$ is a local martingale. As $\Delta M^b$ is bounded, it is
integrable, and so we have
\begin{align}
 \int_0^t M^b_s\df{M^i}_s&=\int_0^t \Delta
M^b_s\df{M^i}_s+\int_0^t M^b_{s-}\df{M^i}_s=A^x_t+\int_0^t M^b_{s-}\df{M^i_s}.
\end{align}
As $\int_0^t M^b_{s-}\df{M^i_s}$ is a local martingale, again by Theorem 6.5 of \cite{HWY}, we finally conclude that
$M^bM^i-A^x$ is a local martingale. Thus, $M^2-A$ is a local martingale. This proves
existence in the case where $M=M^b+M^i$, where $M^b$ is bounded and
$M^i$ has integrable variation.

Finally, we consider the case of a general local martingale $M$ with
initial value zero. By Theorem III.29 of \cite{PP}, $M=M^b+M^i$, where $M^b$ is locally bounded and
$M^i$ has paths of finite variation. With $(T_n)$ a localising
sequence for both $M^b$ and $M^i$, our previous results then show the
existence of a process $A^n\in\aaf$ such that $(M^{T_n})^2-[M]$ is a
local martingale and $\Delta A^n = (\Delta M^{T_n})^2$. By uniqueness, we may define
$[M]$ by putting $[M]_t = A^n_t$ for $t\le T_n$. We then obtain
that $[M]\in\aaf$, $M^2-[M]$ is a local martingale and $\Delta[M]=(\Delta M)^2$, and the
proof is complete.
\end{proof}

\section{Discussion}

\label{section:Discussion}

The results given in Sections \ref{section:Compensator} and
\ref{section:QV} yield comparatively simple proofs of existence of the
compensator and the quadratic variation, two technical concepts
essential to martingale theory in general and stochastic calculus in
particular. We will now discuss how these proofs may be used to give a
simplified account of the development of the basic results of
stochastic integration theory. Specifically, the question we ask is
the following: How can one, starting from basic continuous-time martingale
theory, construct the stochastic integral of a locally bounded
predictable process with respect to a semimartingale, as simply as possible?

Since the publication of one of the first complete accounts of the
general theory of stochastic integration in \cite{DM}, several others
have followed, notably \cite{HWY}, \cite{RW2}, \cite{OK}, \cite{JS} and
\cite{PP}, each contributing with simplified and improved proofs. The accounts in \cite{HWY} and \cite{RW2} make use of the predictable
projection to prove the Doob-Meyer theorem, and to obtain the
uniqueness of this projection, they apply the difficult section
theorems. In \cite{OK} and \cite{PP}, this dependence is removed,
using the methods of, among others, \cite{KMR} and \cite{RFB}, respectively. In
general, however, the methods in \cite{OK} and \cite{PP} are not
entirely comparable, as \cite{OK} follows the traditional path of
starting with continuous-time martingale theory, developing some
general theory of processes, and finally constructing the stochastic
integral for semimartingales, while \cite{PP} begins by defining a
semimartingale as a ``good integrator'' in a suitable sense, and develops the theory from
there, in the end proving through the Bichteler-Dellacherie theorem
that the two methods are equivalent. The developement of the
stochastic integral we will suggest below follows in the tradition
also seen in \cite{OK}.

We suggest the following path to the construction of the stochastic integral:

\begin{enumerate}
\item Development of the predictable $\sigma$-algebra and predictable
  stopping times, in particular the equivalence between, in the
  notation of \cite{RW2}, being ``previsible'' and being ``announceable''.
\item Development of the main results on predictable processes, in
  particular the characterization of predictable c\`{a}dl\`{a}g processes as
  having jumps only at predictable times, and having the jump at a
  predictable time $T$ being measurable with respect to the
  $\sigma$-algebra $\FFF_{T-}$.
\item Proof of the existence of the compensator, leading to the
  fundamental theorem of local martingales, meaning the decomposition
  of any local martingale into a locally bounded and a locally
  integrable variation part. Development of the quadratic variation
  process using these results.
\item Construction of the stochastic integral using the fundamental
  theorem of local martingales and the quadratic variation process.
\end{enumerate}

The proofs given in Sections \ref{section:Compensator} and
\ref{section:QV} help make this comparatively short path possible. We
now comment on each of the points above, and afterwards compare the
path outlined with other accounts of the theory.

As regards point 1, the equivalence between a stopping time being previsible (having a predictable graph)
and being announceable (having an announcing sequence) is proved in
\cite{RW2} as part of the PFA theorem, which includes the introduction
of $\FFF_{T-}$. However, the equivalence between P (previsibility) and
A (accessibility) may be done without any reference to $\FFF_{T-}$, and this
makes for a pleasant separation of concerns.

The main result in point 2, the characterization of predictable c\`{a}dl\`{a}g
functions, can be found for example as Theorem 3.33 of \cite{HWY}. The
proof, however, implicitly uses the d\'{e}but theorem, which is almost as
difficult to obtain as the section theorems. However, the dependence
may be removed if only one can prove, without use of the d\'{e}but theorem, that the jumps of predictable
c\`{a}dl\`{a}g processes may be covered by a sequence of predictable times, and this
is in fact possible.

The existence of the compensator in point 3 may now be obtained as in
Section \ref{section:Compensator}, and the fundamental
theorem of local martingales may then be proven as in the proof of Theorem III.29
of \cite{PP}. After this, the existence of the quadratic variation may
be obtained as in Section \ref{section:QV}. Note that the traditional
method for obtaining the quadratic variation is either as the
remainder term in the integration-by-parts formula (as in \cite{JS}),
or through a localisation to $\msq$, applying the Doob-Meyer
theorem. Our method removes the need for the application of the
Doob-Meyer theorem.

Finally, in point 4, these results may be combined to obtain the
existence of the stochastic integral of a locally bounded predictable
process with respect to a semimartingale using the fundamental theorem
of local martingales and a modification of the methods given in
Chapter IX of \cite{HWY}.

As for comparisons of the approach outlined above with other approaches,
for example \cite{OK}, the main benefit of the above approach would be
that the development of the compensator is obtained in a very simple
manner, in particular not necessitating a decomposition into
predictable and totally inaccessible parts, and without any reference
to ``naturality''. Note, however, that the expulsion of ``naturality''
from the proof of the Doob-Meyer theorem in \cite{KMR} already was
obtained in \cite{AJ} and \cite{BSV}. In any case, focusing attention
on the compensator instead of a general supermartingale decomposition
simplifies matters considerably. Furthermore, developing the quadratic
variation directly using the fundamental theorem of local martingales allows
for a very direct construction of the stochastic integral, while the
method given in \cite{OK} first develops a preliminary integral for
local martingales which are locally in $\msq$.

\appendix

\section{Auxiliary results}

\label{section:Auxiliary}


\begin{lemma}
\label{lemma:ReverseUIFatou}
Let $(X_n)$ be a sequence of uniformly integrable variables. It then holds that
\begin{align}
  \limsup_{n\to\infty}EX_n\le E\limsup_{n\to\infty} X_n.
\end{align}
\end{lemma}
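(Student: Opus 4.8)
The plan is to prove this reverse Fatou lemma by using uniform integrability as a substitute for the domination hypothesis in the classical reverse Fatou lemma, the key device being truncation from above. For fixed $K>0$ I would write each variable as $X_n = (X_n\wedge K) + (X_n-K)^+$, so that $EX_n = E(X_n\wedge K) + E(X_n-K)^+$, both expectations being finite since a uniformly integrable family is bounded in $\LLL^1$ and hence $X_n\wedge K$ and $(X_n-K)^+$ are integrable. For $K>0$ one has $(X_n-K)^+\le |X_n|1_{(|X_n|>K)}$, so the second term is dominated, uniformly in $n$, by the uniform integrability tail $\sup_m E|X_m|1_{(|X_m|>K)}$, which tends to zero as $K\to\infty$.

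This reduces matters to the truncated variables $X_n\wedge K$, which are bounded above by the constant $K$ and hence dominated. For these I would apply the dominated reverse Fatou lemma, itself an immediate consequence of the ordinary Fatou lemma applied to the nonnegative sequence $K-(X_n\wedge K)$, to obtain $\limsup_n E(X_n\wedge K)\le E\limsup_n(X_n\wedge K)$. Using that $x\mapsto x\wedge K$ is continuous and nondecreasing I would identify $\limsup_n(X_n\wedge K)=(\limsup_n X_n)\wedge K$, and combining this with the tail estimate yields, for every $K>0$,
\begin{align}
  \limsup_{n\to\infty} EX_n \le E\left((\limsup_{n\to\infty} X_n)\wedge K\right) + \sup_m E|X_m|1_{(|X_m|>K)}.
\end{align}

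It then remains to let $K\to\infty$: the tail term vanishes, and $(\limsup_n X_n)\wedge K$ increases to $\limsup_n X_n$, so I would pass to the limit by monotone convergence. The step requiring care, and the main obstacle, is ensuring that $E\limsup_n X_n$ is meaningful and that this monotone passage is legitimate, since a priori $\limsup_n X_n$ could be unbounded below. I would resolve this by applying the ordinary Fatou lemma to the nonnegative sequence $(|X_n|)$ to conclude that $\liminf_n|X_n|$ is integrable, and by noting $\limsup_n X_n\ge -\liminf_n|X_n|$; hence the negative part of $\limsup_n X_n$ is integrable, $E\limsup_n X_n$ is well-defined in $(-\infty,\infty]$, and monotone convergence applies to $(\limsup_n X_n)\wedge K$, which is bounded below by the integrable variable $-\liminf_n|X_n|$. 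Taking $K\to\infty$ in the displayed inequality then gives the claim, the inequality being trivial when $E\limsup_n X_n=+\infty$.
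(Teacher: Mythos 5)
Your proof is correct and takes essentially the same route as the paper: truncate from above at a level controlled by uniform integrability and apply Fatou's lemma to a nonnegative reversed sequence, namely your $K-(X_n\wedge K)$ versus the paper's $\lambda-X_n1_{(X_n\le\lambda)}$. The only substantive difference is at the end: the paper uses $X_n1_{(X_n\le\lambda)}\le X_n$ directly and lets $\eps\to0$ at a fixed truncation level, while you instead let $K\to\infty$ via monotone convergence, which obliges you---commendably, since the paper leaves this implicit---to verify that $\limsup_n X_n$ has integrable negative part so that $E\limsup_n X_n$ is well-defined.
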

\begin{proof}
Since $(X_n)$ is uniformly integrable, it holds that
$\lim_{\lambda\to\infty}\sup_nEX_n1_{(X_n>\lambda)}$ is
zero. Let $\eps>0$ be given, we may then pick $\lambda$ so large that
$EX_n1_{(X_n>\lambda)}\le\eps$ for all $n$. Now, the sequence
$(\lambda-X_n1_{(X_n\le \lambda)})_{n\ge1}$ is nonnegative, and Fatou's lemma
therefore yields
\begin{align}
     \lambda-E\limsup_{n\to\infty}X_n1_{(X_n\le\lambda)}
  &=E\liminf_{n\to\infty}(\lambda-X_n1_{(X_n\le \lambda)})\notag\\
  &\le \liminf_{n\to\infty}E(\lambda-X_n1_{(X_n\le \lambda)})\notag\\
  &=\lambda-\limsup_{n\to\infty}EX_n1_{(X_n\le \lambda)}.
\end{align}
The terms involving the limes superior may be infinite and are
therefore a priori not amenable to arbitrary arithmetic manipulation. However,
by subtracting $\lambda$ and multiplying by minus one,
we yet find
\begin{align}
  \limsup_{n\to\infty}EX_n1_{(X_n\le \lambda)}\le
  E\limsup_{n\to\infty}X_n1_{(X_n\le\lambda)}.
\end{align}
As we have ensured that $EX_n1_{(X_n>\lambda)}\le\eps$ for all $n$, this yields
\begin{align}
       \limsup_{n\to\infty} EX_n
  \le \eps+E\limsup_{n\to\infty}X_n1_{(X_n\le\lambda)}
  \le \eps+E\limsup_{n\to\infty}X_n,
\end{align}
and as $\eps>0$ was arbitrary, the result follows.
\end{proof}

\begin{lemma}
\label{lemma:IncreasingCadlagSumConv}
Let $(f_n)$ be a sequence of nonnegative increasing c\`{a}dl\`{a}g mappings from
$\RR_+$ to $\RR$. Assume that $\sum_{n=1}^\infty f_n$ converges
pointwise to some mapping $f$ from $\RR_+\to\RR$. Then, the convergence is uniform on compacts, and $f$ is a nonnegative
increasing c\`{a}dl\`{a}g mapping. If $f(t)$ has a limit as $t$ tends
to infinity, the convergence is uniform on $\RR_+$.
\end{lemma}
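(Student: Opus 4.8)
The plan is to work with the partial sums $g_N=\sum_{n=1}^N f_n$ and the tail remainders $r_N=f-g_N=\sum_{n>N}f_n$. The elementary properties come first: since each $f_n$ is nonnegative and increasing, every $g_N$ is nonnegative and increasing, the sequence $(g_N)$ is nondecreasing in $N$, and passing to the pointwise limit shows that $f$ is nonnegative and increasing as well. The single observation that drives the whole proof is that $r_N$, being a sum of nonnegative increasing functions, is itself increasing in $t$; moreover $r_N$ takes finite values because $f$ is assumed $\RR$-valued.

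The uniform convergence on compacts is then almost immediate. Fix $K>0$. Since $r_N$ is nonnegative and increasing on $[0,K]$, we have $\sup_{t\in[0,K]}|f(t)-g_N(t)|=\sup_{t\in[0,K]}r_N(t)=r_N(K)=f(K)-g_N(K)$, and the right-hand side tends to $0$ as $N\to\infty$ by pointwise convergence at the single point $K$. This is the crux: monotonicity in $t$ collapses the supremum over the whole interval to the value at the endpoint, reducing uniform convergence to pointwise convergence there. Having established uniform convergence on compacts, I would deduce that $f$ is c\`{a}dl\`{a}g by invoking the standard fact that a uniform-on-compacts limit of c\`{a}dl\`{a}g functions is c\`{a}dl\`{a}g; concretely, right-continuity and the existence of left limits both survive the passage to a uniform limit by a routine $3\eps$ argument.

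For the final claim, suppose $f(t)$ has a limit $f_\infty$ as $t\to\infty$. Because $r_N$ is increasing in $t$, its supremum over all of $\RR_+$ equals its limit at infinity, so $\sup_{t\ge0}r_N(t)=\lim_{t\to\infty}(f(t)-g_N(t))=f_\infty-g_N(\infty)$, where $g_N(\infty)=\sum_{n=1}^N f_n(\infty)$ and each $f_n(\infty)=\lim_{t\to\infty}f_n(t)$ exists in $[0,\infty]$ by monotonicity. Interchanging the limit $t\to\infty$ with the summation over $n$, justified by monotone convergence since all terms are nonnegative and increasing in $t$, gives $f_\infty=\sum_{n=1}^\infty f_n(\infty)$, a convergent series as $f_\infty<\infty$. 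Hence $\sup_{t\ge0}r_N(t)=\sum_{n>N}f_n(\infty)\to0$ as $N\to\infty$, which is exactly uniform convergence on $\RR_+$.

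The proof has no serious obstacle; its whole weight rests on noticing that the hypothesis that \emph{each} $f_n$ is increasing (as opposed to merely the partial sums being increasing) forces the remainder $r_N$ to be increasing in $t$. Once this is in hand, every supremum needed is attained at an endpoint or in the limit at infinity, and the statement reduces to pointwise convergence together with the two standard facts quoted above. The only point requiring mild care is keeping track of finiteness, namely that $f$, each $r_N(K)$, and $f_\infty$ are genuinely finite, so that these suprema are finite and the arithmetic is legitimate.
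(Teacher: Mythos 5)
Your proof is correct and takes essentially the same route as the paper: both exploit the fact that a tail of nonnegative increasing functions is increasing, so the supremum of the error over $[0,t]$ (or over $\RR_+$) collapses to its value at the right endpoint (or its limit at infinity), reducing uniform convergence to pointwise convergence there together with summability of the $f_n(\infty)$, and then invoking the standard fact that a uniform limit of c\`{a}dl\`{a}g functions is c\`{a}dl\`{a}g. The only difference is cosmetic: the paper bounds Cauchy differences $\sup_{0\le s\le t}\sum_{k=n+1}^m f_k(s)=\sum_{k=n+1}^m f_k(t)$ of the partial sums and appeals to completeness, whereas you estimate the remainder $r_N=f-g_N$ against the limit directly.
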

\begin{proof}
Fix $t\ge0$. For $m\ge n$, we have
\begin{align}
  \sup_{0\le s\le t}\left|\sum_{k=1}^m f_k(s)-\sum_{k=1}^nf_k(s)\right|
  = \sup_{0\le s\le t} \sum_{k=n+1}^m f_k(s)
  = \sum_{k=n+1}^m f_k(t),
\end{align}
which tends to zero as $m$ and $n$ tend to infinity. Therefore,
$(\sum_{k=1}^n f_k)$ is uniformly Cauchy on $[0,t]$, and so has a c\`{a}dl\`{a}g
limit on $[0,t]$. As this limit must agree with the pointwise limit,
we conclude that $\sum_{k=1}^n f_k$ converges uniformly on compacts to
$f$, and therefore $f$ is nonnegative, increasing and c\`{a}dl\`{a}g.

It remains to consider the case where $f(t)$ has a limit $f(\infty)$
as $t$ tends to infinity. In this case, we find that $\lim_t
f_n(t)\le \lim_t f(t)=f(\infty)$, so $f_n(t)$ has a limit
$f_n(\infty)$ as $t$ tends to infinity as well. Fixing $n\ge1$, we have
\begin{align}
  \sum_{k=1}^n f_k(\infty)
  =\sum_{k=1}^n \lim_{t\to\infty}f_k(t)
  =\lim_{t\to\infty}\sum_{k=1}^n f_k(t)
  \le \lim_{t\to\infty} f(t)
  =f(\infty).
\end{align}
Therefore, $(f_k(\infty))$ is absolutely summable. As we have
\begin{align}
  \sup_{t\ge 0}\left|\sum_{k=1}^m f_k(t)-\sum_{k=1}^nf_k(t)\right|
  = \sup_{t\ge 0} \sum_{k=n+1}^m f_k(t)
  = \sum_{k=n+1}^m f_k(\infty),
\end{align}
we find that $(\sum_{k=1}^n f_k)$ is uniformly Cauchy on $\RR_+$, and
therefore uniformly convergent. As the limit must agree with the
pointwise limit, we conclude that $f_n$ converges uniformly to $f$ on
$\RR_+$. This concludes the proof.
\end{proof}

\begin{lemma}
\label{lemma:UniformCadlagConvergence}
Let $(f_n)$ be a sequence of bounded c\`{a}dl\`{a}g mappings from $\RR_+$ to
$\RR$. If $(f_n)$ is Cauchy in the uniform norm, there is a bounded
c\`{a}dl\`{a}g mapping $f$ from $\RR_+$ to $\RR$ such that
$\sup_{t\ge0}|f_n(t)-f(t)|$ tends to zero. In this case, it holds that
$\sup_{t\ge0}|f_n(t-)-f(t-)|$ and $\sup_{t\ge0}|\Delta f_n(t)-\Delta
f(t)|$ tends to zero as well.
\end{lemma}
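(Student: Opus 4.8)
The plan is to proceed in four stages, one for each assertion of the lemma. First I would establish the existence of the uniform limit $f$. Since the bounded mappings from $\RR_+$ to $\RR$ form a complete space under the uniform norm, and $(f_n)$ is by hypothesis Cauchy in this norm, it suffices to exhibit the limit explicitly. For each fixed $t$, the reals $(f_n(t))$ form a Cauchy sequence in $\RR$ and hence converge; call the limit $f(t)$. Letting $m\to\infty$ in the uniform Cauchy estimate $\sup_{s\ge0}|f_n(s)-f_m(s)|$ then shows $\sup_{t\ge0}|f_n(t)-f(t)|\to0$, and $f$ is bounded because it lies within a finite uniform distance of some $f_N$.

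The heart of the argument is to verify that $f$ is c\`{a}dl\`{a}g. Right-continuity follows from a routine three-epsilon estimate: choosing $n$ so that $\|f_n-f\|_\infty$ is small and using the right-continuity of $f_n$ at $t$ controls $|f(s)-f(t)|$ for $s$ slightly larger than $t$. The existence of left limits is the step I would treat most carefully. The key observation is that for any two mappings $g,h$ possessing left limits at $t$ one has $|g(t-)-h(t-)|\le\|g-h\|_\infty$, since $g(t-)-h(t-)=\lim_{s\uparrow t}(g(s)-h(s))$ and each increment is bounded by the uniform distance. Applying this to pairs $f_n,f_m$ shows that $(f_n(t-))_n$ is Cauchy in $\RR$, hence convergent to some value $L_t$; a second three-epsilon estimate, combining $\|f_n-f\|_\infty$ small, $|f_n(t-)-L_t|$ small, and the convergence $f_n(s)\to f_n(t-)$ as $s\uparrow t$, then identifies $L_t$ with $\lim_{s\uparrow t}f(s)$, so that $f(t-)$ exists and equals $L_t$.

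With $f$ shown to be c\`{a}dl\`{a}g, the remaining two assertions follow at once from the same inequality. For the left limits, $|f_n(t-)-f(t-)|\le\|f_n-f\|_\infty$ holds for every $t$, and taking the supremum over $t$ gives $\sup_{t\ge0}|f_n(t-)-f(t-)|\le\|f_n-f\|_\infty\to0$. For the jumps, writing $\Delta g(t)=g(t)-g(t-)$ and applying the triangle inequality yields $|\Delta f_n(t)-\Delta f(t)|\le|f_n(t)-f(t)|+|f_n(t-)-f(t-)|\le2\|f_n-f\|_\infty$, and again taking the supremum over $t$ delivers the claim.

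I expect the main obstacle to be the careful verification that $f$ admits left limits everywhere; every other step reduces either to the single inequality $|g(t-)-h(t-)|\le\|g-h\|_\infty$ or to a standard three-epsilon argument, so once that inequality is in hand the proof is essentially mechanical.
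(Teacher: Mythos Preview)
Your proposal is correct and follows essentially the same approach as the paper: pointwise limit, pass to the limit in the uniform Cauchy condition, a three-epsilon argument for right-continuity, the inequality $|g(t-)-h(t-)|\le\|g-h\|_\infty$ to get Cauchyness of $(f_n(t-))$ and then a three-epsilon identification of the left limit, followed by the same inequality for the final two assertions. The organization and key estimates match the paper's proof almost step for step.
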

\begin{proof}
Assume that $(f_n)$ is Cauchy in the uniform norm. This implies that
$(f_n(t))_{n\ge1}$ is Cauchy for any $t\ge0$, therefore
convergent. Let $f(t)$ be the limit. Now note that as $(f_n)$ is
Cauchy in the uniform norm, $(f_n)$ is bounded in the uniform norm,
and therefore $\sup_{t\ge0}|f(t)|\le
\sup_{n\ge1}\sup_{t\ge0}|f_n(t)|$, so $f$ is bounded as well. In order
to obtain uniform convergence, let $\eps>0$. Let $k$ be such that for
$m,n\ge k$, $\sup_{t\ge0}|f_n(t)-f_m(t)|\le\eps$. Fix $t\ge0$, we then
obtain for $n\ge k$ that
\begin{align}
  |f(t)-f_n(t)|=\lim_m|f_m(t)-f_n(t)|\le\eps.
\end{align}
Therefore, $\sup_{t\ge0}|f(t)-f_n(t)|\le\eps$, and so $f_n$ converges uniformly
to $f$.

We now show that $f$ is c\`{a}dl\`{a}g. Let $t\ge0$, we will show that $f$ is
right-continuous at $t$. Take $\eps>0$ and take $n$ so that 
$\sup_{t\ge0}|f(t)-f_n(t)|\le\eps$. Let $\delta>0$ be such that
$|f_n(t)-f_n(s)|\le\eps$ for
$s\in[t,t+\delta]$, then
\begin{align}
  |f(t)-f(s)|\le|f(t)-f_n(t)|+|f_n(t)-f_n(s)|+|f_n(s)-f_n(t)|\le3\eps
\end{align}
for such $s$. Therefore,
$f$ is right-continuous at $t$. Now let $t>0$, we claim that $f$ has a
left limit at $t$. First note that for $n$ and $m$ large enough, it
holds for any $t>0$ that
$|f_n(t-)-f_m(t-)|\le\sup_{t\ge0}|f_n(t)-f_m(t)|$. Therefore, the
sequence $(f_n(t-))_{n\ge1}$ is Cauchy, and so convergent to some
limit $\xi(t)$. Now let $\eps>0$ and take $n$ so that
$\sup_{t\ge0}|f(t)-f_n(t)|\le\eps$ and $|f_n(t-)-\xi(t)|\le\eps$. Let
$\delta>0$ be such that $t-\delta\ge0$ and such that whenever
$s\in[t-\delta,t)$, $|f_n(s)-f_n(t-)|\le\eps$. Then
\begin{align}
  |f(s)-\xi(t)|\le
  |f(s)-f_n(s)|+|f_n(s)-f_n(t-)|+|f_n(t-)-\xi(t)|\le3\eps
\end{align}
 for any such $s$. Therefore, $f$ has a left limit at $t$. This shows that $f$ is c\`{a}dl\`{a}g.

Finally, we have for any $t>0$ and any sequence $(s_n)$
converging strictly upwards to $t$ that
$|f(t-)-f_n(t-)|=\lim_m|f(s_m)-f_n(s_m)|\le
\sup_{t\ge0}|f(t)-f_n(t)|$, so we conclude that $\sup_{t\ge0}|f(t-)-f_n(t-)|$ converges
to zero as well. As a consequence, we also obtain that $\sup_{t\ge0}|\Delta f(t)-\Delta
f_n(t)|$ converges to zero.
\end{proof}


\begin{thebibliography}{99}

\bibitem{RFB} Bass, Richard F.: The {D}oob-{M}eyer decomposition
revisited, {\em Canad. Math. Bull.} {\bf 39}(2) (1996) 138--150.

\bibitem{BSV} Beiglb{\"o}ck, M., Schachermayer, W., Veliyev, B.: A
short proof of the {D}oob-–{M}eyer theorem, {\em Stochastic
Processes and their Applications} {\bf 122}(4) (2012) 1204--1209.

\bibitem{DM} Dellacherie, C. and Meyer, P.-A.: {\em Probabilities and
    Potential B: Theory of Martingales}, Elsevier, 1982.

\bibitem{HWY} He, S.-W., Wang, J.-G. and Yan, J.-A.: {\em
    Semimartingale Theory and Stochastic Calculus}, Science Press, 1992.

\bibitem{JS} Jacod, J. and Shiryaev, A.: Limit Theorems for Stochastic
Processes, Springer, 2003.

\bibitem{AJ} Jakubowski, A.: An almost sure approximation for the
  predictable process in the {D}oob-{M}eyer decomposition theorem, in:
  {\em S\'eminaire de {P}robabilit\'es {XXXVIII}} (2005) 158--164,
  Springer.

\bibitem{OK} Kallenberg, O.: {\em Foundations of Modern Probability},
  Springer, 2002

\bibitem{PP} Protter, P.: {\em Stochastic Integration and Differential
  Equations}, Springer, 2005

\bibitem{KMR} Rao, M.: On decomposition theorems of {M}eyer, {\em
    Math. Scand.} {\bf 24} (1969), 66--78

\bibitem{RW1} Rogers, L. C. G. and Williams, D.: {\em Diffusions, Markov
  Processes and Martingales}, volume 1, Cambridge University Press, 2000

\bibitem{RW2} Rogers, L. C. G. and Williams, D.: {\em Diffusions, Markov
  Processes and Martingales}, volume 2, Cambridge University Press, 2000

\bibitem{WR2} Rudin, W.: Functional Analysis, McGraw-Hill, 1973

\bibitem{AET} Taylor, A. E.: {\em Functional Analysis}, Wiley 
1958,

\end{thebibliography}
\end{document}